\newtheorem{theorem}{Theorem}[section]
\newtheorem{lemma}[theorem]{Lemma}
\newtheorem{proposition}[theorem]{Proposition}
\newtheorem{conjecture}[theorem]{Conjecture}
\theoremstyle{definition}
\newtheorem{definition}[theorem]{Definition}
\newtheorem{remark}[theorem]{Remark}
\renewcommand{\qed}{\hfill \mbox{\raggedright \rule{0.1in}{0.1in}}}
\newcommand{\I}{i_2}
\newcommand{\blah}{i_3}
\newcommand{\III}{i_4}
\newcommand{\IIIIII}{i_r}
\newcommand{\IIIIIII}{i_1}
\newcommand{\cc}{\cdot}
\newcommand{\sse}{\subset}
\newcommand{\mc}{\mathcal}
\newcommand{\bb}{\mathbb}
\newcommand{\al}{\alpha}
\newcommand{\II}{\mathrm{I}}
\address{Massachusetts Institute of Technology, 428 Memorial Drive, Cambridge MA}
\email{rumenrd@mit.edu}
\begin{document}

\title[Lower Central Series]
{On the Maximal Containments of Lower Central Series Ideals}
\author[Rumen Dangovski]{Rumen Rumenov Dangovski}

\thanks
{The research of the author began at the Research Science Institute, Summer 2013, and was a part of the Undergraduate Research
Opportunities Program at the Massachusetts Institute of Technology, Fall 2014 and Spring 2015}

\begin{abstract}
We begin a discussion about the maximal containments of lower central series
ideals: ideals generated by products of two--sided ideals of the lower central
series of the free associative algebra on $n$ generators. We introduce two new
ideas to the topic, the PBW grading and the pigeonhole principle, that help us give a complete classification of the containments for $n=2$ and obtain
partial results in the general case.
\end{abstract}

\maketitle

\section{Introduction}

\subsection{Preliminaries}

In this paper all vector spaces are taken over a field $\mathcal{K}$ of
characteristic zero. Let $A_n$ (simply $A$ if $n$ is not specified) be the free
associative unitary algebra on $n$ generators $x_1,\dots,x_n.$ An \emph{element}
means a polynomial in $A_n$ if not specified. The bracket operation
$[a,b]=ab-ba$ equips $A_n$ with the structure of a Lie algebra. We norm commutators
to the right, i.e. $$[a_1,a_2,\dots, a_m]\equiv [a_1,[a_2,\dots,a_m]].$$ 

A \emph{pure commutator} is a commutator with single generators in its slots. For
example, $[x_1,x_2,x_3]$ is a pure commutator, while
$[x_1+7x_2,3x_1^2x_3,x_4,2x_5x_6]$ is not. The \emph{length} of a commutator is
its number of slots. In the above example the latter commutator has length four.
We also set the following convention: a generator $x_i$ in $A_n$ is itself a pure commutator of length one. A \emph{pure--commutator product} is a finite product of pure commutators. For example, $x_1x_2x_3^2[x_1,x_5,x_7][x_1,x_2][x_1,x_7]$ is a pure-commutator product consisting of 7 commutators ($x_1,$ $x_2,$ two pure commutators $x_3$, $[x_1,x_5,x_7],$ $[x_1,x_2]$ and $[x_1,x_7]$), 
while $x_1[x_1,x_2]+x_6[x_2,x_3][x_3,x_4]$ and $x_1[x_1,x_2][x_1,x_2][x_1,x_2^2]$ are not such products. We do not count $0$ as a pure commutator, nor a pure--commutator product, i.e. the commutators in the product cannot consist only of one generator, unless they are of length one.

Unless it is specified we do not care about the specific generators in the commutators in pure products, and often we will only introduce a sequence $(i_1,\dots,i_k),$ where $k$ is the number of commutators in the product and $i_j$ is the length of a pure commutator for $j=1,\dots,k.$ The above--mentioned example of pure--commutator product corresponds to the 7-tuple $(1,1,1,1,3,2,2).$

\subsection{Motivation}

The \emph{lower central series} $\{L_i(A_n)\}_{i\geq 1}$ of $A_n$ is defined by
$$L_1(A_n):=A_n \text{ and } L_{i}(A_n):=[A,L_{i-1}(A_n)]$$ 
for $i\geq 2.$ Unless $n$ is
specified $L_i\equiv L_i(A_n),$ and essentially $L_i$ is spanned by all
commutators of length $i$ with entries in $A_n.$ The series
$\{M_{i}(A_n)\}_{i\geq 1}$ is the series of the two--sided (one sided due to the
Leibniz rule) ideals $M_i=A\cdot L_i \cdot A$ generated by $L_i.$ Note that we
can abbreviate $M_i\equiv M_i(A_n).$

The lower central series ideals form an important filtration $\mc{M}$ in non--commutative
algebra
$$
A = M_1 \supset M_2 \supset M_3 \cdots
$$
It helps us ``approximate'' the algebra $A$ through the quotients $N_i:=M_i/M_{i+1},$ i.e.
$$
A=\mathrm{Gr}_{\mc{M}}(A)=\bigoplus_{i\geq 1}^{\infty}M_i/M_{i+1}=\bigoplus_{i\geq 1}^{\infty}N_i
$$

Therefore, in order to better understand this commutator-graded structure of $A,$ we need to study the $N$--ideals, i.e. to be aware of the structure of the $M$--ideals. This gives rise to the consideration of containment of products of $M$--elements.  
By \emph{lower central series ideals} in this paper we mean finite products of
$M$--elements $M_{i_1}\cdots M_{i_k},$ where $i_p$ is greater or equal to two for
$p=1,\dots, k.$ There are a few results on inclusions of products of
$M$--elements:
\begin{theorem}[Jennings, 1947, \cite{J}] \label{theorem7}
For every $n$ and $k$ there exists $m,$ such that $$\left(M_2(A_n)\right)^m
\subset M_k(A_n).$$
\end{theorem}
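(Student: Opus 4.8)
The plan is to show that a high enough power of $M_2$ lands inside $M_k$ by tracking how commutator length grows when we multiply elements of $M_2$. The key observation is that $M_2 = A\cdot L_2\cdot A$ is spanned by elements of the form $a[b,c]d$, and a product of two such ``commutator-bearing'' elements can be rearranged, using the Leibniz/derivation identities, to expose a longer commutator modulo terms that themselves lie deeper in the filtration. More precisely, the engine of the proof is the identity $[a,b]\,c = \tfrac12\bigl([[a,b],c] + (\text{symmetric/lower-order terms})\bigr)$, or cleaner still the relations $M_i M_j \subset M_{i+j-1} + (\text{correction in } M_{i+1})$ type statements that let one trade a drop in the product count for a gain in commutator length.

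First I would record the elementary closure facts: $M_i$ is a two-sided ideal, $M_i\supset M_{i+1}$, and $[M_i, A]\subset M_{i+1}$ with $[M_i,M_j]\subset M_{i+j-1}$ (the last from expanding a commutator of commutators via the Jacobi identity). Next, the heart of the argument is a lemma of the shape: there is a function $f$ with $(M_2)^m \subset M_{r}$ whenever $m\geq f(r)$. I would prove this by induction on $r$. The base case $r=2$ is trivial. For the inductive step, suppose $(M_2)^{m}\subset M_{r-1}$ for $m$ large; I want to push a product of many copies of $M_2$ into $M_r$. Take a product $g_1 g_2\cdots g_N$ with each $g_s\in M_2$; group it into blocks, each block already lying in $M_{r-1}$ by induction. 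Within a single element $h\in M_{r-1}$ written as $h = a[u_1,\dots,u_{r-1}]b$, I multiply by an extra factor from $M_2$, say $a'[v,w]b'$, and use the Leibniz rule to move the bracket $[v,w]$ inside: commuting it past $u_1,\dots,u_{r-1}$ each step either produces a genuine length-$r$ commutator (an element of $M_r$) or a ``leftover'' in which the bracket $[v,w]$ has been absorbed but we have spawned a bracket $[[v,w],u_j]$ of length one more — which again lands in $M_r$. Iterating and carefully bounding how many correction terms appear and in which $M_j$ they sit gives the claim, with $f$ growing (exponentially, but that is fine — existence is all we need).

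The main obstacle will be bookkeeping the correction terms: each time we slide a commutator past a non-commutator factor we generate error terms, and a priori these errors are only in $M_{r-1}$ again, not in $M_r$, so a naive induction stalls. The fix is to set up the induction hypothesis more carefully — for instance, to prove simultaneously that $(M_2)^m \subset M_r$ \emph{and} to control the ``defect'' $(M_2)^m \cap M_{r-1}$ modulo $M_r$ as a finitely generated object, or equivalently to exploit that $N_{r-1}=M_{r-1}/M_r$ is, in bounded degree, finite-dimensional, so only finitely many ``obstruction classes'' can occur and each is killed after finitely many extra $M_2$-factors. An alternative, cleaner route I would also consider: pass to the associated graded algebra $\mathrm{Gr}_{\mc M}(A)=\bigoplus N_i$, observe it is generated in the first few degrees, show $N_1\cdot N_1 \subset \bigoplus_{i\geq 2} N_i$ forces products of $N_1$'s to eventually vanish in each fixed target degree, and lift back. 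Either way, the essential content — that multiplying by $M_2$ cannot forever keep you out of $M_k$ because each multiplication has a chance to deepen the commutator — is the same, and once the correction-term accounting is pinned down the theorem follows.
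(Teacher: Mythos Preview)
The paper does not give a proof of this statement: Theorem~\ref{theorem7} is cited to Jennings (1947) as background in the Motivation subsection, with no argument supplied anywhere in the text. There is nothing in the paper to compare your proposal against.

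As for the proposal on its own terms, the gap you yourself flag is real and your two suggested patches do not close it. The appeal to finite-dimensionality of $N_{r-1}=M_{r-1}/M_r$ in bounded monomial degree fails because each further $M_2$-factor raises monomial degree by at least two, so the error terms you generate never sit in any fixed finite-dimensional piece and there is no mechanism forcing the obstruction classes to stabilise or be killed. The associated-graded route is circular: the assertion that long products of the image of $M_2$ in $\mathrm{Gr}_{\mc{M}}(A)$ eventually vanish in each graded piece below $k$ is exactly a restatement of $(M_2)^m\subset M_k$; moreover the filtration $\{M_i\}$ is not multiplicative (for $n\geq 4$ one even has $M_2\cdot M_2\not\subset M_3$, cf.\ the Remark following Theorem~\ref{thmzz}), so $\mathrm{Gr}_{\mc{M}}(A)$ does not carry the graded-algebra structure your lifting step presupposes. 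Finally, the Leibniz manoeuvre in your inductive step does not land in $M_r$ either: commuting $[v,w]$ past a slot $u_j$ produces a factor in $L_3$, and the resulting product lies in something like $M_3\cdot M_{r-2}$, which Theorem~\ref{theorem3} places only back in $M_{r-1}$. A correct proof needs an idea beyond this bookkeeping, and your sketch does not supply one.
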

\begin{theorem}[Latyshev, 1965, \cite{Lat}, and Gupta and Levin, 1983, \cite{GL}] \label{theorem3}
For all $m$ and $l$ $$M_m \cdot M_l \subset M_{m+l-2}.$$
\end{theorem}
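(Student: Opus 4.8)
The plan is to reduce the assertion about the two--sided ideals to an assertion about the Lie powers $L_m\cdot L_l$, and then to prove that by an induction on the commutator lengths.

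First I would record the auxiliary fact $M_i=L_iA=AL_i$. The starting point is that $L_i$ is a Lie ideal of $A$, i.e. $[A,L_i]\subseteq L_i$; this follows by induction on $i$ from the Jacobi identity, writing a typical element of $[A,L_i]$ as $[b,[a,w]]$ with $w\in L_{i-1}$ and expanding $[b,[a,w]]=[[b,a],w]+[a,[b,w]]$, where the first summand lies in $[A,L_{i-1}]=L_i$ and, by the inductive hypothesis $[A,L_{i-1}]\subseteq L_{i-1}$, so does the second. In particular $L_{i+1}\subseteq L_i$. Now $L_iA$ is visibly a right ideal, and it is also a left ideal, since for $x\in A$, $y\in L_i$, $a\in A$ we have $x(ya)=(yx)a+[x,y]a$ with $[x,y]\in L_{i+1}\subseteq L_i$; hence $L_iA$ is a two--sided ideal containing $L_i$, so $L_iA=M_i$, and symmetrically $AL_i=M_i$. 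Consequently
$$M_mM_l=(L_mA)(AL_l)=L_mAL_l=L_mL_lA ,$$
using $AL_l=L_lA$ in the last step. Since $M_{m+l-2}$ is a right ideal, the desired inclusion $M_mM_l\subseteq M_{m+l-2}$ is now equivalent to
$$L_m\cdot L_l\subseteq M_{m+l-2}.$$

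I would prove this last inclusion by induction (on $l$, say, for all $m$ at once). The base cases $m=2$ or $l=2$ are immediate: if $l=2$ then $m+l-2=m$ and $L_mL_2\subseteq M_m\cdot A=M_m$ because $L_m\subseteq M_m$; the case $m=2$ is symmetric. For the inductive step (say $l\geq 3$) take $u\in L_m$, $v\in L_l$, and write $v=[a,w]$ with $a\in A$ and $w\in L_{l-1}$. The Leibniz identity $[ua,w]=u[a,w]+[u,w]a$ gives
$$uv=[ua,w]-[u,w]a .$$
The factor $[u,w]$ lies in $[L_m,L_{l-1}]\subseteq L_{m+l-1}\subseteq M_{m+l-1}\subseteq M_{m+l-2}$, so $[u,w]a$ may be discarded. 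There remains $[ua,w]$ with $ua\in L_mA=M_m$; the idea is to keep peeling the generators of $w$ off one at a time, using Leibniz and Jacobi to rewrite $[ua,w]$ as a sum of genuine long commutators lying in $L_{m+l-2}$, together with products $L_{m'}\cdot L_{l'}$ having $l'<l$ (to which the inductive hypothesis applies), plus correction terms of strictly lower degree.

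The hard part is controlling those correction terms. A single Leibniz step only yields $L_mL_l\subseteq M_{m+l-3}$, and upgrading $M_{m+l-3}$ to $M_{m+l-2}$ is not a formal manipulation: one cannot gain a commutator degree merely by bracketing, since it is false in general that $[A,M_p]\subseteq M_{p+1}$ (already $[A,M_2]\not\subseteq M_3$). Closing this off--by--one gap is the crux of the proof. I expect the resolution to require either carrying a sharpened inductive hypothesis that records the \emph{shape} of $L_mL_l$ inside $M_{m+l-3}$ modulo $M_{m+l-2}$ --- precisely enough that bracketing a remainder with a generator lands back in $M_{m+l-2}$ --- or, what I find more transparent, reading everything off the Poincar\'e--Birkhoff--Witt filtration of $A=U(\mathcal{L})$ with $\mathcal{L}$ the free Lie algebra, and tracking PBW--degrees throughout the induction. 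That bookkeeping is the only real obstacle; the reduction, the base cases, and the commutator algebra are routine.
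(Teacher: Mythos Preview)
The paper does not prove this theorem at all: Theorem~\ref{theorem3} is quoted as a background result from the literature (Latyshev 1965, Gupta--Levin 1983), with no proof supplied. So there is no ``paper's own proof'' against which to compare your attempt.

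That said, your proposal is not a proof but a sketch with a self-acknowledged gap, and the gap is real. The reduction $M_mM_l=L_mL_lA$ and the base cases $m=2$ or $l=2$ are fine. In the inductive step, however, after writing $uv=[ua,w]-[u,w]a$ you are left with $[ua,w]\in[M_m,L_{l-1}]$, and your only tool for this is another Leibniz expansion, which produces a term in $L_{l-1}\cdot L_m$; the inductive hypothesis places that merely in $M_{m+l-3}$. You correctly identify that recovering the missing degree is not formal, since $[A,M_p]\not\subseteq M_{p+1}$ in general. Your two proposed fixes are not carried out, and the second one is unlikely to work: the PBW filtration in this paper is used to prove \emph{non}-inclusions (Proposition~\ref{proposition1}), because PBW degree gives an \emph{upper} bound on how deep in the $M$-filtration an element can sit, never a lower bound. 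Knowing that a product has PBW degree $k$ tells you nothing about which $M_j$ it belongs to. The ``sharpened inductive hypothesis'' idea is closer in spirit to what Latyshev and Gupta--Levin actually do, but their arguments rest on specific polynomial identities (of the flavour of the Pigeonhole Identity used later in the paper) that you have not produced; absent such an identity, the induction does not close.
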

\begin{theorem}[Bapat and Jordan, 2010, \cite{BJ}] \label{theorem2}
For odd $m$ or odd $l$ $$M_m \cdot M_l \subset M_{m+l-1}.$$
\end{theorem}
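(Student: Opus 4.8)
The plan is to strip the statement down --- using only the Leibniz rule and Theorem~\ref{theorem3} --- to a single inclusion about a product of two commutators, and then to locate that inclusion inside $\mathrm{Gr}_{\mc M}(A)$, where the parity hypothesis should manifest through the word-reversal anti-automorphism of $A$.

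\emph{Reduction.} From $ac=ca+[a,c]$ one gets $AL_k\subseteq L_kA+M_{k+1}$, hence $M_k=AL_kA=L_kA+M_{k+1}$. Since $M_{m+l-1}$ is a two-sided ideal, $M_mM_l=AL_mAL_lA=AL_m\cc M_l$, so it suffices to prove $L_m\cc M_l\subseteq M_{m+l-1}$; and since $M_l=L_lA+M_{l+1}$ with $L_mM_{l+1}\subseteq M_mM_{l+1}\subseteq M_{m+l-1}$ by Theorem~\ref{theorem3}, the theorem becomes equivalent to
$$L_m\cc L_l\subseteq M_{m+l-1}\qquad(m\text{ odd}).$$
The case of odd $l$ then comes for free: the anti-automorphism $\tau$ of $A$ reversing words has $\tau([a,b])=-[\tau a,\tau b]$, so it carries a length-$i$ commutator to $\pm$ a length-$i$ commutator and fixes every $L_i$ and every $M_i$; hence $\tau(M_mM_l)=M_lM_m$, and $M_mM_l\subseteq M_{m+l-1}\iff M_lM_m\subseteq M_{m+l-1}$.

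\emph{Reformulation.} By Theorem~\ref{theorem3}, both $M_{m+1}M_l$ and $M_mM_{l+1}$ lie in $M_{m+l-1}$, so multiplication descends to a linear map $\mu\colon N_m\otimes N_l\to N_{m+l-2}$, and the theorem says $\mu=0$ for $m$ odd. Two facts cost nothing. First, for $p\in L_m$, $q\in L_l$ one has $[p,q]\in L_{m+l}\subseteq M_{m+l-1}$, so $\overline{pq}=\overline{qp}$ in $N_{m+l-2}$: $\mu$ is ``symmetric on commutator inputs.'' Second, $\tau$ induces on each $N_i$ an involution $\bar\tau$ which is the identity on the classes of odd-length pure commutators and negates those of even length. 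The heuristic one hopes to vindicate is that each odd layer $N_{2k+1}$ is concentrated in the $(+1)$-eigenspace of $\bar\tau$; together with the symmetry of $\mu$ this should force $\overline{pq}$ into an eigenspace incompatible with being a nonzero product, giving $\mu=0$ (the ground field has characteristic zero).

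\emph{The hard core.} Proving $L_m\cc L_l\subseteq M_{m+l-1}$ is the substantive step, and the elementary commutator calculus does not close it on its own: the Jacobi identity and $[x,yw]=[x,y]w+y[x,w]$ trade a length-$m$ bracket for shorter ones, but the error terms --- a benign piece in $A\cc L_{l+2}\subseteq M_{m+l-1}$, plus pieces of the shape $L_aL_b$ or (length-$j$ commutator)$\cc A$ that a priori lie only in $M_j$ --- regenerate products of the same type instead of descending, so one is thrown onto base cases; already $M_2\cc M_3\subseteq M_4$, i.e.\ $L_2\cc L_3\subseteq M_4$, is not formal. One must show that those error terms, summed over a presentation of the element of $M_j$, actually lie in $M_{j+2}$, and the mechanism is exactly the $\tau$-asymmetry between odd- and even-length commutators recorded above. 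Making this rigorous amounts to pinning down enough of the multiplicative and $\bar\tau$-module structure of $\mathrm{Gr}_{\mc M}(A)$ --- in the spirit of the Feigin--Shoikhet description of the quotients $N_i$ in terms of even-degree differential forms --- and this structural input is the main obstacle; everything else is the reduction above.
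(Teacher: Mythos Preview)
The paper does not prove this theorem at all: Theorem~\ref{theorem2} is quoted from Bapat and Jordan~\cite{BJ} as a known input, with no argument supplied here. So there is no ``paper's proof'' to compare against beyond the citation, and the companion result Theorem~\ref{theorem6}, $[M_l,L_k]\subset L_{k+l}$ for odd $l$, which is the inductive engine Bapat and Jordan actually use.

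Your proposal, on the other hand, is not a proof but an outline that explicitly stops short of the content. The reduction paragraph is fine and standard: stripping to $L_mL_l\subseteq M_{m+l-1}$ via $M_k=L_kA+M_{k+1}$ and Theorem~\ref{theorem3}, and using the reversal anti-automorphism $\tau$ to swap the roles of $m$ and $l$, are both correct. But in your ``hard core'' section you yourself say that the elementary commutator calculus does not close the argument, that one is ``thrown onto base cases,'' that already $L_2L_3\subseteq M_4$ ``is not formal,'' and that making the $\bar\tau$-eigenspace heuristic rigorous ``is the main obstacle.'' That is an honest assessment, but it means the proposal contains no proof of the only nontrivial step. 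The heuristic that odd layers $N_{2k+1}$ lie in the $(+1)$-eigenspace of $\bar\tau$ is exactly the sort of statement that requires proof, and invoking Feigin--Shoikhet structure ``in spirit'' does not supply one.

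For orientation: Bapat and Jordan do use the word-reversal anti-automorphism, but they package the parity phenomenon into the concrete inclusion $[M_l,L_k]\subset L_{k+l}$ for odd $l$ (stated here as Theorem~\ref{theorem6}), proved by an explicit induction with a ``Key Lemma'' handling the base. From that, $L_mL_l\subseteq M_{m+l-1}$ for odd $m$ follows quickly: write $p\in L_m$ as $[a,p']$ with $p'\in L_{m-1}$, expand $pq=[a,p'q]-p'[a,q]$ and iterate, using $[M_l,L_1]\subset L_{l+1}$ at each step. If you want a self-contained argument, that is the missing ingredient to supply; your eigenspace reformulation is pleasant but, as written, does not replace it.
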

\begin{theorem}[Bapat and Jordan, 2010, \cite{BJ}] \label{theorem6}
For odd $l$
$$
[M_l,L_k]\subset L_{k+l}.
$$
\end{theorem}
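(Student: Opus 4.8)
The plan is to prove the containment by an induction that strips one generator at a time from an element of $M_l$. By the one--sided description $M_l=L_l\cdot A_n$ (a consequence of the Leibniz rule, as noted above), the space $[M_l,L_k]$ is spanned by brackets $[wb,c]$ with $w\in L_l$, $c\in L_k$, and $b$ a monomial in $x_1,\dots,x_n$, so it suffices to show $[wb,c]\in L_{k+l}$ for every such triple. First I would record the base case $b=1$: the inclusion $[L_l,L_k]\subseteq L_{k+l}$ is standard, since the Jacobi identity lets one rewrite the bracket of a length-$l$ commutator and a length-$k$ commutator as a sum of right--normed commutators of length $l+k$; no parity hypothesis enters here. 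Then I would induct on $\deg b$.

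For the inductive step, write $b=b'x_j$ with $x_j$ a generator and apply the Leibniz rule, obtaining
\[
[wb,c]=(wb')[x_j,c]+[wb',c]\,x_j .
\]
The first summand lies in $M_l\cdot L_{k+1}\subseteq M_l\cdot M_{k+1}\subseteq M_{k+l}$, where Theorem~\ref{theorem2} is invoked at the odd index $l$; this is the one point where the oddness of $l$ is genuinely used. The second summand is $v\,x_j$ with $v:=[wb',c]\in L_{k+l}$ by the inductive hypothesis, and $v\,x_j=x_jv+[v,x_j]$ with $[v,x_j]\in[A_n,L_{k+l}]=L_{k+l+1}$. So every individual piece is plainly in $M_{k+l}$ --- but $M_{k+l}\supsetneq L_{k+l}$, and the content of the theorem is that these pieces, none of which lies in $L_{k+l}$ by itself, recombine to something that does.

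The main obstacle is exactly this recombination, and it is why an induction carried out merely ``modulo $M_{k+l}$'' will not close: one has to strengthen the inductive hypothesis so that it controls $[wb,c]$ modulo $L_{k+l}$ itself, for instance by exhibiting an explicit representative of it that manifestly lies in $L_{k+l}$. The device I would reach for is the reversal anti--automorphism $\tau$ of $A_n$ (fixing each generator, with $\tau(pq)=\tau(q)\tau(p)$): it preserves every $L_i$, and it sends a right--normed commutator $[a_1,\dots,a_i]$ to $(-1)^{i-1}[\tau(a_1),\dots,\tau(a_i)]$, so for odd $i=l$ it carries $w$ to another right--normed length-$l$ commutator \emph{with no sign}. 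This lets one trade a dangling left--hand monomial factor for a right--hand one; combining such a symmetrisation with Theorem~\ref{theorem2} (or with a Lie--level refinement of it) is what I expect to promote ``$\in M_{k+l}$'' to ``$\in L_{k+l}$''.

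Finally, I would test the precise form of the strengthened induction in the case $k=1$ first --- there the statement reads $[M_l,A_n]\subseteq L_{l+1}$ for odd $l$, which already contains the full difficulty (even for $l=3$ the expansion above produces terms that are in $M_4$ but visibly not in $L_4$) --- and only then propagate it to general $k$, where the bookkeeping of the error terms is heavier but should be the same phenomenon.
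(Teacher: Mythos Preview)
The paper does not prove this statement: Theorem~\ref{theorem6} is quoted from Bapat and Jordan~\cite{BJ} as a known result, and the author remarks that the supporting inductions ``can be found in the paper of Bapat and Jordan.'' So there is no proof in the present paper to compare your attempt against.

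On the substance of your proposal: you have correctly located the difficulty --- the Leibniz expansion only places each piece in $M_{k+l}$, and the content of the theorem is precisely the upgrade to $L_{k+l}$ --- and you say so honestly, so what you have is a plan rather than a proof. Two remarks on the plan. First, your handling of the term $(wb')[x_j,c]$ appeals to Theorem~\ref{theorem2}; but in the Bapat--Jordan development Theorem~\ref{theorem2} is derived \emph{from} Theorem~\ref{theorem6}, so as written this step is circular, and you would need to reach that inclusion by an independent route (or run a joint induction that proves both statements simultaneously). Second, your instinct about the reversal anti-automorphism $\tau$ is the right one: the parity hypothesis on $l$ enters exactly through the sign $(-1)^{l-1}$ that $\tau$ puts on a length-$l$ right-normed commutator, and symmetrising under $\tau$ is what converts the ``product'' error terms into genuine brackets. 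That is indeed the mechanism in the original argument; what remains is to carry it out so that the induction closes in $L_{k+l}$ without leaning on Theorem~\ref{theorem2}, and that execution is the whole proof.
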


\begin{remark}
In ~\cite{kdrumen} (Section 1. Introduction) one may find a brief historical discussion of these results. In particular, we thank the authors for pointing out that it was Latyshev to first prove Theorem \ref{theorem3}.
\end{remark}

Essentially, these results allow us not to be redundant, i.e. to omit a few
inductions that can be found in the paper of Bapat and Jordan \cite{BJ}.
None of the known results ask whether the inclusions are maximal. This gives
rise to the discussion that follows.

We say that a pure-commutator product $p$ has \emph{a maximal inclusion in} $M_l(A_n)$ if $p$ is in $M_l(A_n)$ but $p$ is not in $M_{l+1}(A_n).$ What follows is the main definition of the paper.

\begin{definition}
For an associative unitary algebra $R$ and a $k$-tuple $i=(i_1,\dots,i_k),$ such
that
$i_p\geq 2$ for $p=1,\dots,k$ let $$
\II (R,i) \in \mathbb{N}
$$
be the maximal index such that
$$
M_{i_1}(R)\cdots M_{i_k}(R)\subset M_{\II (R,i)}(R),
$$
while
the product $M_{i_1}(R)\cdots M_{i_k}(R)$ is not contained in $M_{\II
(R,i)+1}(R).$
\end{definition}

For example, later in this paper we show that $\II(A_2,(2,2))=3,$ i.e.
$M_2(A_2)\cdot M_2(A_2) \subset M_3(A_2),$ but $M_2(A_2)\cdot M_2(A_2)
\nsubseteq M_4(A_2).$ Note that in this example $R=A_2$ and $i=(2,2)$ is a
2-tuple. The only results from the above-mentioned that we can apply is Theorem
\ref{theorem3}, which only tells us the trivial $$M_2(A_2)\cdot M_2(A_2) \subset
M_2(A_2).$$

\subsection{Main results}

The results presented in the Motivation imply the following bounds: 
\begin{itemize}
\item Theorem \ref{theorem3} implies $\II(A_n,(m,l))\geq m+l-2;$
\item Theorem \ref{theorem2} implies $\II(A_n,(m,l))\geq m+l-1$ when $m$ or $l$ is odd.
\end{itemize}

To our knowledge, prior to this paper there are no attempts to obtain an upper bound on $\II(A_n,i).$ In this paper we find an upper bound, which tells us that $\II(A_n,(m,l))\geq m+l-1$ is sharp for some partial cases. More specifically, we provide a complete classification of the lower central series containments for $A_2.$ 

\begin{theorem}[Complete classification for $n=2$] \label{theorem1}
\label{theorem1}
Given the $k$-tuple
$i=(i_1,\dots,i_k),$ such that $i_p$ is greater than or equal to two for
$p=1,\dots ,k$ the following holds
$$
\II(A_2,i)=i_1+\dots+i_k-k+1.
$$
\end{theorem}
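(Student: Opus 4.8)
The plan is to trap $\II(A_2,i)$ between an upper bound coming from a grading on $A_2$ built out of the Poincar\'e--Birkhoff--Witt (PBW) theorem and a matching lower bound coming from an improvement, special to two generators, of Theorems \ref{theorem3} and \ref{theorem2}. \emph{The PBW grading.} Fix a Hall basis $\{e_c\}$ of the free Lie algebra on $x_1,x_2$, each $e_c$ homogeneous of bracket length $\ell_c\ge 1$, so the ordered products $e_{c_1}e_{c_2}\cdots e_{c_r}$ form a $\mc K$-basis of $A_2$. Assign to such a monomial the weight $\omega(e_{c_1}\cdots e_{c_r})=1+\sum_{s=1}^{r}(\ell_{c_s}-1)$ and let $F^w\sse A_2$ be the span of the PBW monomials of weight $\ge w$. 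First I would record the elementary properties of $\omega$: it is a valuation, $\omega(ab)\ge\omega(a)+\omega(b)-1$, and the associated graded algebra $\mathrm{Gr}_\omega(A_2)$ is the \emph{commutative} polynomial ring on the classes $\overline{e_c}$ (reordering two Hall factors costs a bracket, which has strictly larger weight). In particular $\mathrm{Gr}_\omega(A_2)$ is a domain, $\omega(ab)=\omega(a)+\omega(b)-1$ for nonzero $a,b$, and $\omega([a,b])\ge\omega(a)+\omega(b)$. Since every generator has weight $1$, an iterated commutator of length $i$ has weight $\ge i$, hence $\omega(c)\ge i$ for $c\in L_i(A_2)$, and then $\omega(ucv)\ge\omega(u)+\omega(c)+\omega(v)-2\ge i$ gives the containment $M_i(A_2)\sse F^i$.

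\emph{The upper bound.} Let $p_1,\dots,p_k$ be nonzero pure commutators of lengths $i_1,\dots,i_k$. Each $p_s$ is a homogeneous Lie element of degree $i_s$, so $\omega(p_s)=i_s$, and since $\mathrm{Gr}_\omega(A_2)$ is a domain, $\omega(p_1\cdots p_k)=i_1+\dots+i_k-(k-1)$ \emph{exactly}. Thus $p_1\cdots p_k\in F^{\,i_1+\dots+i_k-k+1}\setminus F^{\,i_1+\dots+i_k-k+2}$, so by $M_i(A_2)\sse F^i$ it is not in $M_{i_1+\dots+i_k-k+2}(A_2)$; as $p_1\cdots p_k\in M_{i_1}(A_2)\cdots M_{i_k}(A_2)$ we conclude $\II(A_2,i)\le i_1+\dots+i_k-k+1$.

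\emph{The lower bound.} It suffices to prove the two--factor statement $M_m(A_2)\cc M_l(A_2)\sse M_{m+l-1}(A_2)$ for \emph{all} $m,l\ge 2$, since iterating it collapses $M_{i_1}(A_2)\cdots M_{i_k}(A_2)$ into $M_{i_1+\dots+i_k-k+1}(A_2)$, which together with the upper bound proves the theorem. Theorem \ref{theorem2} already supplies this whenever $m$ or $l$ is odd, so the one remaining case is $m$ and $l$ both even. Here I would use the Leibniz rule to reduce to the product of a single pure commutator of length $m$ and a single pure commutator of length $l$ (the error terms produced by moving generators past commutators raise one of the two lengths, so it becomes odd and falls under Theorem \ref{theorem2}), and then invoke the pigeonhole principle on the entries: with only two generators available, a pure commutator of even length must repeat letters in a controlled way, which lets one rewrite the even--even product as a sum of products each of which is again governed by the odd case. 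Equivalently, this step upgrades the PBW containment to the identity $M_i(A_2)=F^i$, after which the lower bound is the single line $M_{i_1}(A_2)\cdots M_{i_k}(A_2)\sse F^{i_1}\cdots F^{i_k}\sse F^{\,i_1+\dots+i_k-k+1}=M_{i_1+\dots+i_k-k+1}(A_2)$.

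\emph{Main obstacle.} The PBW grading and the upper bound are essentially formal once the Hall basis is fixed; the crux is the even--even case of $M_m\cc M_l\sse M_{m+l-1}$ in $A_2$, precisely the place where Theorems \ref{theorem3} and \ref{theorem2} leave a one--step gap and where no purely formal manipulation can help. Supplying the missing input via the pigeonhole principle, and carefully tracking the Leibniz error terms through the reduction from an arbitrary element of $M_{i_1}(A_2)\cdots M_{i_k}(A_2)$ down to a product of two pure commutators, is the delicate part of the argument.
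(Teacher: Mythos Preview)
Your upper bound is correct and is essentially a reformulation of the paper's Proposition~\ref{proposition1}: your filtration weight $\omega(e_{c_1}\cdots e_{c_r})=1+\sum(\ell_{c_s}-1)$ equals (standard degree) $-$ (number of Lie factors) $+1$, so $F^w$ encodes the same information as the paper's ``PBW degree'' (number of Lie factors), and your argument that a product of pure commutators has weight exactly $i_1+\dots+i_k-k+1$ is the same content as Lemma~\ref{lemma1}. The packaging as a multiplicative filtration with $M_i\sse F^i$ is clean.

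The gap is in the lower bound, and you flag it yourself. You correctly reduce to the two-factor containment $M_m(A_2)\cc M_l(A_2)\sse M_{m+l-1}(A_2)$ and correctly isolate the even--even case as the missing input, but the sentence ``invoke the pigeonhole principle on the entries \dots\ rewrite the even--even product as a sum of products each of which is again governed by the odd case'' is not a proof, and the mechanism you suggest is not the one that works. In particular, the claim that Leibniz error terms ``raise one of the two lengths, so it becomes odd'' is wrong: the Leibniz rule $[ab,c]=a[b,c]+[a,c]b$ preserves commutator length while lowering the degree of the slot entries, so the reduction to pure commutators is a degree induction, not a parity trick. Once you are down to a product $p\cdot q$ of two \emph{even}-length pure commutators, nothing in your outline forces either factor to become odd.

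The paper's route through this step is more indirect than a direct rewriting of $p\cdot q$. The key algebraic input is the single identity
\[
[a,b][a,c]=a[a,b,c]+[a,b,a]c+[a,ac,b],
\]
which the paper calls the \emph{Pigeonhole Identity}; the combinatorial pigeonhole enters only to guarantee that, with two generators, any two nontrivial commutators share a slot letter, so this identity can always be applied after a Leibniz reduction. From it the paper first proves $[M_k,L_1]\sse M_{k+1}$ (Lemma~\ref{lem1}), then bootstraps by Jacobi to $[M_k,L_j]\sse M_{k+j}$ for all $j$ (Theorem~\ref{th1}). The even--even case then falls out in one line: for $b\in L_{m-1}$, $a\in A$, $c\in L_l$ one has $[b,a]c=[b,ac]-a[b,c]$, and both terms lie in $[L_{m-1},M_l]\sse M_{m+l-1}$ since $m-1$ is odd. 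Your proposal is missing exactly this intermediate layer $[M_k,L_j]\sse M_{k+j}$; without it (or an equivalent), the even--even containment, and hence the equality $M_i(A_2)=F^i$ you want, does not follow.
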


This is our main theorem. Finding $\II(R,i)$ would characterize the
\emph{infrastructure} of
the lower central series ideals in $R$ in more detail. First of all, in Section \ref{grading}
through the PBW theorem we describe an unconventional grading of $A_n$ that
helps us prove non-inclusions. Second of all,
in Section \ref{inclusions} we obtain Theorem \ref{theorem1} via the simple pigeonhole principle. 
Moreover, we prove that the ideals $M_r(A_2)$ are finitely generated as two-sided ideals. We give an explicit set of generators. 
%Such a description for the general case of $A_n$ has been given only for $r \leq 5.$ (see \cite{EKM} for $n=2,3,4$ and ... for $n=5$). 
We conclude the section with a result on universal Lie nilpotent associative algebras. 
In Section \ref{discussion} we discuss the case of $n=3.$ In Section \ref{approach} we present an idea of non-containment, as well as a conjecture for maximal inclusions for the case of the tuple $(2,\dots,2) \in \bb{N}^{k}.$ In Section \ref{bounds} we combine the theorems, known previously, and our results. In Section \ref{applications} briefly we show applications to PI theory of the results in this paper.

\section{The PBW grading. Non-containment of ideals.} \label{grading}

Our proofs of non-containment rely on a nontrivial PBW grading of $A_n.$

Let $\mathfrak{g}_n$ be the free unitary Lie algebra on $n$ generators
$x_1,\dots x_n$ with an ordering $\leq$ of the basis
$B=\{1,x_1,\dots,x_n,[x_1,x_2],[x_1,[x_1,x_2]],\dots\},$ in which the pure commutators correspond to Lyndon words. Note that the bracket
$[,]$ of $\mathfrak{g}_n$ is not necessarily defined by $[a,b]=ab-ba.$ According
to the PBW Theorem, the set $
\{ b_{i_1}\cdots b_{i_m} \mid m\geq 0, b_{i_j}\in B, j \leq m, i_1\leq \cdots
\leq i_m \}
$
forms a basis of the universal enveloping algebra $\mathcal{U}(\mathfrak{g}_n).$
The PBW filtration
$$
\mathcal{F}_0 \subset \mathcal{F}_1 \subset \mathcal{F}_2 \cdots
$$
is defined by $\mathcal{F}_0=\mathcal{K},$ and $\mathcal{F}_r=\{a_{1}\cdots
a_{k} \mid a_{i} \in \mathfrak{g_n}, k \leq r\}$ for $r$ greater than or equal
to one.
Hence, from the fact that both of the algebras (as vector spaces) satisfy the same universal property 
$\mathrm{Gr}_{\mathcal{F}}(\mathcal{U}(\mathfrak{g}_n)) \cong
\mathcal{K}[x_1,\dots,x_n].$
There is a natural isomorphism between $A_n$ and $\mathcal{U}(\mathfrak{g}_n)$
defined by mapping generators. The isomorphism equips $A_n$ with PBW grading as follows:
$\mathcal{K}$ sits in degree zero; $\mathfrak{g}_n$ sits in PBW degree one; a
product of $r$ elements in $\mathfrak{g}_n$ sits in PBW degree $r$ for $r$
greater than or equal to two.

For example, for the ordering $\leq$ the monomial basis of $A_2$ up to (normal,
monomial) degree three is
\begin{center}
\begin{tabular}{c | l}
Monomial degree & Elements \\
\hline
Zero & 1 \\
One & $x_1, x_2$ \\
Two & $x_1^2, x_2^2, x_1x_2, x_2x_1$ \\
Three & $x_1^3, x_2^3, x_1^2x_2^1, x_1x_2^2, x_2^1x_1^2, x_2^2x_1 $
\end{tabular}
\end{center}
However, the PBW basis of $A_2$ up to PBW degree three is \begin{center}
\begin{tabular}{c | l}
PBW degree & Elements \\
\hline
Zero & 1 \\
One & $x_1,x_2,[x_1,x_2],[x_1,[x_1,x_2]],[x_1,[x_2,[x_1,x_2]]],\dots$ \\
Two &
$x_1^2,x_2^2,x_1x_2,x_1[x_1,x_2],[x_1,x_2][x_1,[x_2,x_1]],x_2[x_1,[x_2,[x_1,x_2]]],\dots$
\\
Three & $x_1^3, x_2^3, x_1x_2x_1, [x_1,x_2][x_1,x_2]x_2,
[x_1,[x_2,x_1],[x_1,x_2]]x_1[x_1,x_2],\dots$
\end{tabular}
\end{center} 
The commutative ordering for the normal degree is not necessary, while it is for
the PBW degree (due to the PBW theorem). The simplest case is
$x_2x_1=x_1x_2-[x_1,x_2]$ where the error term $[x_1,x_2]$ is of lower PBW
degree one.

\begin{lemma}[The PBW argument] \label{lemma1}
For $r\geq 2$ and $k\geq 0$ any element $m$ in $L_r(A_n)$ of standard degree
$r+k$ has a PBW degree equal to $k+1.$
\end{lemma}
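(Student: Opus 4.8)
The plan is to track what happens to PBW degree under the bracket operation and then induct on the length $r$ of the commutator. The key structural fact is the one already emphasized in the examples: if $a$ sits in PBW degree $d$ and $b$ in PBW degree $e$, then the associative product $ab$ sits in PBW degree $d+e$, but the commutator $[a,b]=ab-ba$ sits in PBW degree \emph{at most} $d+e$ with the top-degree parts of $ab$ and $ba$ cancelling, so in fact $[a,b]$ lies in $\mathcal F_{d+e-1}$. In other words, bracketing by an element of $\mathfrak g_n$ (PBW degree one) does not raise PBW degree; the top PBW-graded pieces commute because $\mathrm{Gr}_{\mathcal F}(\mathcal U(\mathfrak g_n))$ is the commutative polynomial ring $\mathcal K[x_1,\dots,x_n]$. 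First I would record this cancellation as the basic lemma: for $a\in\mathcal F_d$ and $b\in\mathfrak g_n$, the element $[a,b]$ lies in $\mathcal F_d$, i.e. its PBW degree is at most $d$.

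Next, the lower bound. Here I would use the standard (normal) grading simultaneously. An element $m\in L_r(A_n)$ of standard degree $r+k$ is a linear combination of length-$r$ commutators whose $r$ slots hold monomials of total degree $r+k$; equivalently, rewriting each slot in the PBW basis and expanding all brackets, $m$ becomes a $\mathcal K$-linear combination of PBW monomials $b_{j_1}\cdots b_{j_s}$ with $b_{j_t}\in B\setminus\{1\}$ and with the total standard degree of the $b_{j_t}$ equal to $r+k$. Since each commutator of length $r$ must use at least $r$ distinct ``commutator factors'' from $\mathfrak g_n$ to be nonzero — and the cancellation above only lets PBW degree drop to $k+1 = (r+k)-(r-1)$ when every one of the $r-1$ nested brackets produces its maximal cancellation — the PBW degree is exactly $k+1$. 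Concretely: a pure commutator $[x_{a_1},x_{a_2},\dots,x_{a_r}]$ has standard degree $r$ and, by repeatedly applying the basic lemma to the $r-1$ brackets, lies in $\mathcal F_1$, hence has PBW degree $1 = (r)-(r-1)$, matching $k=0$. Multiplying into a slot a monomial of standard degree raising the total to $r+k$ adds exactly $k$ to the PBW degree (associative multiplication is additive on PBW degree up to lower-order terms, which here are dominated), giving the claimed $k+1$.

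To make the induction clean I would organize it on $r$ with $k$ arbitrary: for the base case $r=2$, an element of $L_2$ of standard degree $2+k$ is spanned by $[u,v]$ with $\deg u+\deg v=2+k$; writing $u,v$ in the PBW basis shows $[u,v]$ lies in PBW degree exactly $k+1$, the $+1$ coming from the single surviving commutator letter and the $k$ from the extra standard degree, with exact equality because the free Lie algebra injects into $\mathcal U(\mathfrak g_n)$ so no further collapse occurs. For the inductive step, write $m=[a,m']$ with $a\in A_n$ of some standard degree $d\ge1$ and $m'\in L_{r-1}$ of standard degree $(r+k)-d$; decompose $a$ into its PBW-homogeneous pieces and apply the basic bracket-lemma piece by piece together with the inductive hypothesis on $m'$, then check the top PBW term does not vanish by passing to $\mathrm{Gr}_{\mathcal F}$ and using that there the bracket becomes a Poisson-type bracket on $\mathcal K[x_1,\dots,x_n]$ which is nonzero on the relevant generators.

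The main obstacle I expect is the \emph{sharpness} — showing the PBW degree is not strictly less than $k+1$ for \emph{every} nonzero element of $L_r$ of standard degree $r+k$, not merely for a single basis commutator. The clean way around this is to observe that $L_r(A_n)$, being inside the free Lie algebra $\mathfrak g_n$ when $k=0$, maps isomorphically onto a subspace of the PBW-degree-one piece, and for general $k$ the standard-degree-$(r+k)$ part of $L_r$ is spanned by elements obtained from the $k=0$ case by multiplying generators into the slots, an operation that the PBW filtration sees as raising degree by exactly $k$; so one never drops below $k+1$. I would phrase this by exhibiting, for each such $m\ne 0$, a PBW monomial of degree $k+1$ occurring with nonzero coefficient, using the explicit Lyndon basis to avoid any accidental cancellation.
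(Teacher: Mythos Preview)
Your upper-bound argument is correct and is organized more cleanly than the paper's. The paper expands a commutator with monomial entries via repeated Leibniz until it is written explicitly as a sum of products of $k+1$ pure commutators; you instead use the single structural fact $[\mathcal F_d,\mathcal F_e]\subset\mathcal F_{d+e-1}$ (commutativity of $\mathrm{Gr}_{\mathcal F}$) and induct on $r$. Both yield PBW degree $\le k+1$. The paper's route has the advantage of producing the explicit pure-commutator decomposition, which the paper reuses elsewhere; yours has the advantage of isolating exactly which property of the PBW filtration is doing the work.

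Where your plan goes wrong is the sharpness discussion. You set out to show that \emph{every} nonzero $m\in L_r$ of standard degree $r+k$ has PBW degree \emph{exactly} $k+1$, and you sketch a Lyndon-basis/Poisson-bracket argument for this. That target is false. Take $r=2$, $k=1$: both $[x_1^2,x_2]$ and $[x_1,x_1x_2]$ lie in $L_2$ with standard degree $3$, yet
\[
[x_1^2,x_2]-2[x_1,x_1x_2]=-[x_1,x_1,x_2]\in L_3\subset L_2
\]
is a nonzero element of $L_2$ of standard degree $3$ with PBW degree $1$, not $2$. So no argument of the type you outline can succeed. The paper's proof does not establish sharpness for general elements either; after its bilinearity reduction it treats only a \emph{single} commutator with monomial slots, and even there it does not check that the top PBW pieces of the resulting sum fail to cancel. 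In the one place the lemma is invoked (Proposition~\ref{proposition1}) only the inequality PBW degree $\le k+1$ is used on the right-hand side; the left-hand side is handled directly by observing that a product of $k$ nonzero pure commutators has PBW degree exactly $k$, which is immediate since $\mathrm{Gr}_{\mathcal F}\cong S(\mathfrak g_n)$ is a domain. So: keep your filtration argument for the upper bound, drop the general sharpness claim, and for the application argue the exact PBW degree only for the specific product $\ell_1\cdots\ell_k$ of pure commutators.
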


\begin{proof}
Due to bi-linearity it suffices to consider only a single commutator $m$ with monomials in its slots (we can also ignore the scalars in $\mathcal{K}$). 
It is obvious that $m$ is a pure commutator only for $k=0,$ because it has
$r$ slots and degree $r+k.$ Hence when $k=0,$ $m\in \mathfrak{g}_n$ and thus $m$
has a PBW degree one, which is $k+1$ ($k=0$).

When $k\geq 1,$ w.l.o.g. in some of the slots of $m$ there is a product of $v$
generators $x_{i_1}\cdots x_{i_v}$, $v\geq 2.$ Set $a:= x_{i_1}\cdots x_{i_{v-1}}$ and
$b:=x_{i_v}.$ Consider a product $ab$ of elements $a$ and $b$ in a slot of $m:$
$$
m=[a_1,[\dots,[a_{r'},[ab,\ell]]]\dots]=\mathrm{ad}(a_1)(\cdots
\mathrm{ad}(a_{r'})(\mathrm{ad}(ab)(\ell))\cdots),
$$
where $\ell$ is a commutator of length $r-r'-1.$ From the Leibniz rule
$\mathrm{ad}(ab)(\ell)=a\mathrm{ad}(b)(\ell)+\mathrm{ad}(a)(\ell)b$, and
$$
m=\mathrm{ad}(a_1)(\cdots
\mathrm{ad}(a_{r'})(a\mathrm{ad}(b)(\ell))\cdots)+\mathrm{ad}(a_1)(\cdots
\mathrm{ad}(a_{r'})(\mathrm{ad}(a)(\ell)b)\cdots)
$$
Set $b':=\mathrm{ad}(b)(\ell)$ and $c':=\mathrm{ad}(a)(\ell).$ We reduce the number of adjoining operations by one and keep a product of
two elements, namely $ab'$ and $c'b).$ By
induction on $r$ a product of two
elements in a slot of $m$ expands to a linear combination of commutators with
products of less (normal) degree. This leads us to a product of $k+1$ pure
commutators in $\mathfrak{g_n},$ i.e. a product of $k+1$ elements of PBW degree one. From here the
statement of the lemma follows.
\end{proof}
For instance, when $r=3$ and $k=1,$ $[x_1,x_2,x_3x_4]$ expands to
$$[x_1,x_3][x_2,x_4]+x_3[x_1,x_2,x_4]+[x_2,x_3][x_1,x_4]+[x_1,x_2,x_3]x_4,$$
which has PBW degree two. We use Lemma \ref{lemma1} to prove non-containment of
products of lower central series ideals. We know that $M_l(A_n)\cdot M_m(A_n)
\subset M_{n+m-1-\epsilon}(A_n)$ where $\epsilon$ is zero or one, depending on
the parity of $l$ and $m.$ It is natural to conjecture that $M_l \cdot M_m$ is
not in $M_{l+m}.$ The idea is to suppose this is true, and take the lowest
degree part of the LHS and the RHS. The lowest degree part of a lower central
series ideal is a pure commutator. Thus the LHS has a product of two such
commutators that has PBW degree two while the RHS has a single pure commutator
of PBW degree one. This is a contradiction. We extend this idea to the following
result.
	
\begin{proposition} \label{proposition1}
Given $n$ greater than or equal to two and a $k$-tuple $i=(i_1,\dots,i_k)$ in $\bb{N}^k$ the
following holds
$$
\II(A_n,i)\leq 1-k+\sum_{p=1}^{k}i_p.
$$
\end{proposition}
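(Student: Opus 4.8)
The plan is to exhibit, for each tuple $i=(i_1,\dots,i_k)$ with all $i_p\ge 2$, a single pure-commutator product that lies in $M_{i_1}(A_n)\cdots M_{i_k}(A_n)$ but not in $M_N(A_n)$, where $N:=2-k+\sum_{p=1}^{k}i_p$; this immediately gives $\II(A_n,i)\le N-1=1-k+\sum_{p=1}^{k}i_p$. For the test element I would take $c:=c_1\cdots c_k$, where $c_p$ is the basis vector of $\mathfrak{g}_n$ associated to the Lyndon word $x_1^{\,i_p-1}x_2$; this word is Lyndon since $x_1<x_2$, and it uses only two generators, so it is available because $n\ge 2$. Each $c_p$ is then a nonzero pure commutator of length $i_p$, hence $c_p\in L_{i_p}(A_n)\subseteq M_{i_p}(A_n)$ and $c\in M_{i_1}(A_n)\cdots M_{i_k}(A_n)$; moreover $c$ is homogeneous of standard degree $d:=\sum_{p=1}^{k}i_p$.

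The first step is to bound the PBW degrees appearing in $M_N(A_n)$ in standard degree $d$. A spanning element of $M_N(A_n)$ is of the form $a\,m\,b$ with $a,b$ monomials and $m\in L_N(A_n)$; in standard degree $d$ this means $\deg a+\deg m+\deg b=d$ with $\deg m=N+j$ for some $j\ge 0$. A monomial of degree $e$ is a product of $e$ generators, hence lies in $\mathcal{F}_e$, and by Lemma \ref{lemma1} (applicable since $N\ge k+2\ge 3$) the element $m$ has PBW degree $j+1$, so $m\in\mathcal{F}_{j+1}$. Using $\mathcal{F}_s\mathcal{F}_t\subseteq\mathcal{F}_{s+t}$ we get $a\,m\,b\in\mathcal{F}_{\deg a+(j+1)+\deg b}=\mathcal{F}_{d-N+1}=\mathcal{F}_{k-1}$, so every homogeneous element of $M_N(A_n)$ of standard degree $d$ has PBW degree at most $k-1$.

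The second step is to check that $c$ has PBW degree exactly $k$. Each $c_p$ is a single basis vector of $\mathfrak{g}_n$, hence a PBW basis monomial of PBW degree one; when one expands $c=c_1\cdots c_k$ and sorts its factors into PBW order, one only uses relations $b\,b'=b'b+[b,b']$ with $[b,b']\in\mathfrak{g}_n$, and every time a bracket term is produced the number of factors in that monomial strictly drops, so its PBW degree strictly drops. Hence the sorted product of $c_1,\dots,c_k$ is the unique PBW basis monomial of PBW degree $k$ occurring in $c$, and it occurs with coefficient $1$; in particular the PBW-degree-$k$ component of $c$ is nonzero. Since $M_N(A_n)$ is a homogeneous ideal for the standard grading, $c\in M_N(A_n)$ would put $c$ in the degree-$d$ part of $M_N(A_n)$, which has PBW degree at most $k-1$ by the first step — a contradiction. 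Therefore $c\notin M_N(A_n)$ and the proposition follows.

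I expect the only real obstacle to be the second step, namely identifying the top PBW-degree part of a product of Lie elements; it becomes routine precisely because we are free to choose the test commutators $c_p$ to be PBW basis vectors, after which the whole argument reduces to bookkeeping of factor counts under reordering. As a sanity check on the choice of degree, note that $d=\sum_{p=1}^{k}i_p$ is exactly the minimal standard degree occurring in $M_{i_1}(A_n)\cdots M_{i_k}(A_n)$, and in that minimal degree the left-hand side is spanned by products of pure commutators of lengths $i_1,\dots,i_k$, which is why such a product is the natural witness.
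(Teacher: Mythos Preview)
Your proof is correct and follows essentially the same strategy as the paper's: exhibit a product of $k$ pure commutators of lengths $i_1,\dots,i_k$, observe it has PBW degree $k$, and show that every element of $M_N(A_n)$ in standard degree $d=\sum_p i_p$ has PBW degree at most $k-1$ via Lemma~\ref{lemma1}. Your version is in fact more careful than the paper's in one respect: by choosing each $c_p$ to be a Lyndon basis vector of $\mathfrak{g}_n$ and giving the sorting argument, you actually justify that the top PBW-degree component of $c$ is nonzero, whereas the paper simply asserts that the product of pure commutators ``has PBW degree $k$'' without addressing why it does not vanish.
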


\begin{proof}
It suffices to show that
$$
L_{i_1}\cdots L_{i_k} \nsubseteq M_{i_1+\cdots+i_k-k+2}.
$$
The lowest degree part of the LHS has the form $\ell_1\cdots \ell_k,$ where
$\ell_j \in \mathfrak{g}_n$ is a pure commutator in $L_j$ for $j=1,\dots,k.$
Therefore, the LHS has a PBW degree $k.$

The lowest degree part of the RHS, w.l.o.g., has the form $a \cdot \ell,$ where $\ell \in
L_{i_1+\cdots+i_k-k+2}.$ The normal degree is $\sum_{p=1}^{k}i_p,$ due to the
LHS. Hence, we may think we start from a pure commutator and insert $k-2$
generators in $a$ or in the slots of the commutator. If the normal degree of $a$
is $p$ ($p=0,1,\dots,k-2$) the normal degree of $\ell$ is
$\sum_{j=1}^{k}i_j+k-2-p.$ Now, we apply Lemma \ref{lemma1} to $\ell$ to prove
that the RHS breaks into components of PBW degree less than or equal to $k-1.$
This is incompatible with the PBW degree of LHS.
\end{proof}

\begin{remark}
Note that this is the first general result to give us an upper bound on the function $\II.$
\end{remark}

\section{Maximal containment for $n=2.$ Generators of $M$--ideals } \label{inclusions}

Our proofs of containment rely on the following three identities true for all free associative unitary algebras: 
\begin{itemize}
\item $[ab,c]=a[b,c]+[a,c]b$ --- The Leibniz Law;
\item $[a,b,c]+[c,a,b]+[b,c,a]=0$ --- The Jacobi Identity;
\item $[a,b][a,c]=a[a,b,c]+[a,b,a]c+[a,ac,b]$ --- The Pigeonhole Identity \footnote{This identity can be derived
from expansion of $[a,ab,c]$ via the Leibniz rule.}.
\end{itemize}

\subsection{Proof of Theorem \ref{theorem1}}

The PBW argument gives us the upper bound. It suffices to show that it is sharp, i.e. to prove the following result.

\begin{theorem}[Containment for $n=2$] \label{max2var}
For $i$ and $j$ greater than or equal to two 
$$M_i(A_2) \cdot M_j(A_2) \subset M_{i+j-1}(A_2).$$
\end{theorem}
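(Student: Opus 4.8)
The plan is to reduce the statement to a problem about pure--commutator products and then exploit the two--generator restriction heavily. By Theorem~\ref{theorem3} (the Latyshev--Gupta--Levin bound) we already know $M_i(A_2)\cdot M_j(A_2)\subset M_{i+j-2}(A_2)$, and by Theorem~\ref{theorem2} we gain one more when $i$ or $j$ is odd, so the only case that truly needs new work is when \emph{both} $i$ and $j$ are even. Since $M_i=A\cdot L_i\cdot A$ and the $M$--ideals are two--sided, it suffices to prove $L_i(A_2)\cdot L_j(A_2)\subset M_{i+j-1}(A_2)$, and by multilinearity of the bracket it is enough to treat a single product of two pure commutators $c\cdot d$ with $c$ of length $i$, $d$ of length $j$, each slot a generator $x_1$ or $x_2$.

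The key structural input for $n=2$ is the pigeonhole principle hinted at in the introduction: in a pure commutator of length $i$ with entries from $\{x_1,x_2\}$, since $i\ge 2$, some generator appears at least twice. I would first handle the basic building block: a product $[x_a,u][x_a,w]$ sharing a repeated \emph{leading} generator $x_a$. The Pigeonhole Identity $[a,b][a,c]=a[a,b,c]+[a,b,a]c+[a,ac,b]$ (with $a=x_a$) rewrites such a product as an $A$--linear combination of commutators whose lengths are $(\text{len }b)+(\text{len }c)+1$ or more, which is exactly the length gain we need. So the heart of the argument is: given arbitrary pure commutators $c,d$ over two generators, use the Jacobi identity and Leibniz to move things around so that the product $c\cdot d$ is expressed, modulo the required deep ideal, as a sum of terms each of which either (i) already lies in $L_{i+j-1}$ or deeper, or (ii) has the form $[x_a,\dots][x_a,\dots]$ to which the Pigeonhole Identity applies and drops us into $M_{i+j-1}$.

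Concretely I would induct on $i+j$. Using the commutator identity $[[a,b],d] = [a,[b,d]] - [b,[a,d]]$ repeatedly (Jacobi), rewrite the higher of the two commutators, say $c=[x_{a_1},[x_{a_2},\dots]]$; the subtlety is that $c\cdot d$ is a \emph{product}, not a bracket, so I instead work with $[c,d]=cd-dc$, note $cd = [c,d] + dc$, and observe $[c,d]\in L_{i+j}\subset M_{i+j-1}$ already — thus it suffices to compare $cd$ with $dc$, i.e. to show $cd\equiv dc \pmod{M_{i+j-1}}$ is not what we want; rather we want $cd\in M_{i+j-1}$ outright. The cleaner route: expand $c = [x_{a_1}, c']$ with $c'\in L_{i-1}$, apply Leibniz to $c\cdot d = [x_{a_1},c']\,d$ after writing it inside a bracket $[x_{a_1}, c'd] = [x_{a_1},c']d + c'[x_{a_1},d]$, so $cd = [x_{a_1}, c'd] - c'[x_{a_1},d]$. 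The first term lies in $L_1\cdot(\text{stuff})$... — here I must be careful that $c'd\in L_{i-1}\cdot L_j$ and invoke the inductive hypothesis on the smaller total length, while $[x_{a_1},d]\in L_{j+1}$ so $c'[x_{a_1},d]\in L_{i-1}\cdot L_{j+1}$, again smaller in the relevant induction parameter or coverable by Theorems~\ref{theorem2} and~\ref{theorem3}. The base cases and the all--even--both--short case get finished off by explicit application of the Pigeonhole Identity once the repeated generator is exposed.

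The main obstacle I anticipate is \emph{bookkeeping the repeated generator through the induction}: Jacobi and Leibniz rewrites can bury the two occurrences of the repeated $x_a$ at different depths inside nested brackets, and the Pigeonhole Identity only fires when they are both in leading slots of two bracketed factors multiplied together. So the real content is a normalization lemma — showing that over two generators any pure--commutator product can be rewritten, modulo $M_{i+j-1}(A_2)$, into a form where the pigeonhole repetition is made visible at the top level — and I expect that lemma, rather than the final assembly, to consume most of the effort. The two--generator hypothesis is essential here precisely because it forces the repetition; the analogous normalization fails for $n\ge 3$, which is consistent with the paper only claiming a complete classification for $A_2$.
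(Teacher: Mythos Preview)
Your overall strategy---reduce to the even--even case, reduce to $L_i\cdot L_j$, expand via Leibniz, and ultimately fire the Pigeonhole Identity once a repeated generator is exposed---is precisely the paper's route, and your Leibniz decomposition $cd=[x_{a_1},c'd]-c'[x_{a_1},d]$ is (up to relabelling) exactly the paper's final move $[b,a]c=[b,ac]-a[b,c]$. The second term is handled exactly as you say: $c'\in L_{i-1}$ has odd length, so Bapat--Jordan (Theorem~\ref{theorem2}) puts $c'[x_{a_1},d]\in M_{i+j-1}$ immediately.

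The gap is in the first term. You write that $c'd\in L_{i-1}\cdot L_j\subset M_{i+j-2}$ (fine, by Bapat--Jordan again since $i-1$ is odd), and then you need $[x_{a_1},c'd]\in M_{i+j-1}$, i.e.\ $[L_1,M_{i+j-2}]\subset M_{i+j-1}$. This does \emph{not} follow from your induction on $i+j$: unwinding $[x,a\ell]=a[x,\ell]+[x,a]\ell$ lands you in $M_2\cdot L_{i+j-2}$, which is the theorem for the pair $(2,i+j-2)$ with the \emph{same} total $i+j$ and both indices still even. So the induction is circular as set up.

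The paper breaks the circularity by isolating exactly this statement as a standalone lemma: $[M_k,L_1]\subset M_{k+1}$ for all $k\ge 2$ (Lemma~\ref{lem1}). Its proof for even $k$ is your anticipated ``normalization lemma'': reduce $[a[b,l],c]$ modulo $M_{k+1}$ to $[b,l][a,c]$, use the two--generator pigeonhole to find a common generator (say $x_1$) in $b$ and in one of $a,c$, peel by Leibniz down to a shape $[x_1,\ell]d[x_1,c]$, then apply the Pigeonhole Identity. The crucial extra ingredient you did not mention is Theorem~\ref{theorem6} (Bapat--Jordan's $[M_l,L_k]\subset L_{k+l}$ for odd $l$): after the Pigeonhole Identity one of the output terms is of the form $[x_1,x_1c,\ell d]$ with $\ell d\in M_{k-1}$, and since $k-1$ is odd this lands in $L_{k+1}$, not merely $M_{k+1}$. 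Once Lemma~\ref{lem1} is in hand, a short Jacobi induction upgrades it to $[M_k,L_j]\subset M_{k+j}$ (Theorem~\ref{th1}), and then your Leibniz split finishes the job in two lines. So your sketch is correct in spirit; what is missing is formulating the normalization as $[M_k,L_1]\subset M_{k+1}$ and invoking Theorem~\ref{theorem6} inside its proof.
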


In this section we set $M_i \equiv M_i(A_2)$ for any $i$ and $A \equiv A_2.$ We
start with a lemma that relies on the pigeonhole principle.

\begin{lemma} \label{lem1}
$[M_k,L_1] \sse M_{k+1}$ for $k\geq 2.$
\end{lemma}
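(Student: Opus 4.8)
Proof proposal for Lemma \ref{lem1} ($[M_k, L_1] \subseteq M_{k+1}$ for $k \geq 2$).

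The plan is to unwind what $[M_k, L_1]$ means and reduce everything, via the Leibniz law, to commutators of single generators against generators of $M_k$. Recall $M_k = A \cdot L_k \cdot A$, and by the Leibniz rule a one-sided ideal suffices, so a spanning set for $M_k$ is given by elements of the form $\ell \cdot w$ where $\ell \in L_k$ is a commutator of length $k$ (we may take $\ell$ pure, by bilinearity, absorbing lower-degree corrections) and $w \in A$ is a monomial. Since $L_1 = A$, an element of $[M_k, L_1]$ is a linear combination of brackets $[\ell w, u]$ with $u \in A$ a monomial. By the Leibniz law applied repeatedly in both arguments, $[\ell w, u]$ expands into a sum of terms each of which is a monomial times a single bracket $[\ell, x_s]$ times a monomial, plus terms of the form (monomial)$\cdot \ell \cdot$(monomial)$\cdot [w', x_s] \cdot$(monomial) where the bracket falls on a letter of $w$; the latter clearly lie in $A \cdot L_k \cdot A = M_k \subseteq M_{k+1}$ is \emph{false} — wait, they lie in $M_k$, not $M_{k+1}$, so I must be more careful: I cannot simply let the derivation land on $w$. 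Instead I should \emph{first} reduce to the case $w = 1$, i.e. to showing $[\ell, u] \in M_{k+1}$ for $\ell \in L_k$ a commutator of length $k$ and $u \in A$ arbitrary, and then handle general $\ell w$ by noting $[\ell w, u] = \ell[w,u] + [\ell,u]w$; the first summand is in $L_k \cdot A \subseteq M_k$ — again only $M_k$. So the genuine content is: it is \emph{not} true termwise, and the lemma must be using cancellation. Let me restart the strategy.

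The correct approach: reduce to $[M_k, L_1] = [A L_k A, A]$, and observe that modulo $M_{k+1}$ the algebra $N_k = M_k/M_{k+1}$ is a module over $A/M_2 = \mathcal K[x_1,x_2]$ (this is standard in the Feigin–Shoikhet circle of ideas), so commuting by a generator $x_s$ acts trivially on $N_k$ — i.e. $[x_s, M_k] \subseteq M_{k+1}$ — precisely because the adjoint action of $x_s$ on $M_k/M_{k+1}$ factors through the commutative quotient and $[x_s, -]$ is zero there. Concretely, I would prove the key special case $[x_s, \ell] \in M_{k+1}$ for $\ell \in L_k$ by induction on $k$: write $\ell = [a, \ell']$ with $\ell' \in L_{k-1}$, use the Jacobi identity $[x_s, [a,\ell']] = [[x_s,a],\ell'] + [a,[x_s,\ell']]$; the first term has $[x_s,a] \in L_2$ bracketed against $L_{k-1}$, giving an element of $L_{k+1} \subseteq M_{k+1}$ by the very definition of the lower central series, and the second term is $[a, [x_s,\ell']]$ with $[x_s,\ell'] \in M_k$ by induction, so $[a,[x_s,\ell']] \in [A, M_k] \subseteq M_{k+1}$ — here I am using Theorem \ref{theorem3} (or directly $[A, M_k] = [A, AL_kA] \subseteq M_{k+1}$, which follows by Leibniz since $[A,L_k] \subseteq L_{k+1}$). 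The base case $k=2$ is $[x_s, \ell] \in M_3$ for $\ell \in L_2$, i.e. $[x_s,[a,b]] \in L_3 \subseteq M_3$, immediate. This shows $[x_s, L_k] \subseteq M_{k+1}$, hence $[x_s, M_k] \subseteq M_{k+1}$ after one more Leibniz expansion on $A L_k A$, and then bilinearity in the second slot of the bracket upgrades $x_s$ to arbitrary $u \in L_1 = A$.

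The place where the \textbf{Pigeonhole Identity} $[a,b][a,c] = a[a,b,c] + [a,b,a]c + [a,ac,b]$ should enter — and I expect this is the main obstacle and the reason the lemma is stated separately — is in handling the Leibniz expansion $[x_s, AL_kA]$ when the derivation lands on the right-hand or left-hand $A$-factor: one gets terms like $w_1 [x_s, w_2] \cdot \ell \cdot w_3$ in which $[x_s, w_2] \in M_2$ sits to the \emph{left} of $\ell \in L_k$, and $M_2 \cdot L_k$ is only obviously inside $M_k$, not $M_{k+1}$; to push it into $M_{k+1}$ one rewrites $M_2 L_k$ using the identities so that a commutator of length $\geq k+1$ is produced. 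So the real work is a lemma of the shape $L_2 \cdot L_k \subseteq M_{k+1}$ (equivalently the $\epsilon=1$ case of Bapat–Jordan for $m=2$... but $m=2$ is even, so Theorem \ref{theorem2} does \emph{not} apply, which is exactly why $n=2$ and the pigeonhole identity are needed), and I would prove $[a,b]\cdot[c_1,\dots,c_k] \in M_{k+1}$ by feeding it through the Pigeonhole Identity to trade the product of a length-$2$ and a length-$k$ commutator for $a$ times a length-$(k+1)$ commutator plus correction terms of the same type but with strictly smaller total complexity, setting up a second induction. Assembling: $[M_k, L_1] \subseteq [A L_k A, A]$, expand by Leibniz into (i) $A \cdot [L_k, A] \cdot A \subseteq A L_{k+1} A = M_{k+1}$, (ii) $A \cdot ([A,A] \cdot L_k) \cdot A \subseteq A \cdot (L_2 L_k) \cdot A \subseteq A M_{k+1} A = M_{k+1}$, and (iii) $A \cdot L_k \cdot [A,A] \cdot A = A L_k L_2 A \subseteq A M_{k+1} A = M_{k+1}$, where (ii) and (iii) use the $L_2 L_k \subseteq M_{k+1}$ sub-lemma; this gives $[M_k,L_1] \subseteq M_{k+1}$ and closes the argument.
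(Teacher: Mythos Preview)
Your final decomposition is sound: expanding $[A L_k A, A]$ by Leibniz into $A[L_k,A]A + [A,A]L_kA + AL_k[A,A]$ correctly reduces the lemma to the sub-lemma $L_2 \cdot L_k \subseteq M_{k+1}$ (and its mirror), and this is indeed the heart of the matter in the paper as well. The gap is in your treatment of that sub-lemma. You propose to ``feed $[a,b]\cdot[c_1,\dots,c_k]$ through the Pigeonhole Identity'' and induct on complexity, but a direct attempt loops: with $n=2$ one may assume $c_1=a$, and applying $[a,b][a,c]=a[a,b,c]+[a,b,a]c+[a,ac,b]$ with $c=[c_2,\dots,c_k]$ yields, after expanding the third term, the summand $[a,b][a,c]$ back again. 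No complexity has dropped, so the induction you sketch does not terminate as stated.

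What the paper supplies that you are missing is a parity split together with Theorem~\ref{theorem6}. For $k$ odd the lemma is immediate from Bapat--Jordan ($[M_k,L_1]\subseteq L_{k+1}$). For $k$ even, the paper writes the $L_k$-factor as $[b,l]$ with $l\in L_{k-1}$, uses the two-generator pigeonhole \emph{principle} to force a common generator $x_1$ between the two commutators, massages to the shape $[x_1,l]d[x_1,c]$, and only then applies the Pigeonhole \emph{Identity}; the resulting terms land in $L_{k-1}\cdot M_3$ and in $[L_1,[M_{k-1},L_1]]$, both of which are pushed into $M_{k+1}$ precisely because $k-1$ is odd (so Bapat--Jordan gives $M_{k-1}M_3\subseteq M_{k+1}$ and $[M_{k-1},L_1]\subseteq L_k$). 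Without this parity leverage the identity alone does not close the argument. A smaller side remark: your interim claim that ``$[A,M_k]\subseteq M_{k+1}$ follows by Leibniz since $[A,L_k]\subseteq L_{k+1}$'' is exactly the lemma you are proving --- the Leibniz expansion of $[A,AL_kA]$ throws off the same $L_2L_k$ cross-terms, so that shortcut is circular.
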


\begin{proof}
When $k$ is odd the proof follows from Bapat and Jordan's result (in this case we even have the stronger condition $[M_k,L_1] \sse L_{k+1}).$ Hence, it suffices to consider $k$ even. Take a general generator $m=[a[b,l],c] \in [M_k,L_1]$ where $a,b,c \in A$ and $l
\in L_{k-1}.$ Then
$$
m=[b,l][a,c]+a[[b,l],c]=[b,l][a,c] \mod M_{k+1}.
$$
If $a$ is in the field $\mathcal{K}$ then $[a,c]=0,$ i.e. $m = 0 \mod M_{k+1}.$
Otherwise, due to bi-linearity let $a$ and $c$ be monomials. Similarly, w.l.o.g.
$b$ is a monomial. If $a$ and $c$ are monomials in only one generator then $m =
0 \mod M_{k+1}.$ Otherwise their expressions as products of generators differ by
at least one generator. Hence, since $n=2$ $b$ shares a generator with either
$a$ or $c.$ Due to anti-commutativity the common generator is between $b$ and $a$ and w.l.o.g. suppose it is $x_1.$ 

%Then we can factor this generator out through the Leibniz law and
%obtain $m = 0 \mod M_{k+1}.$

Further suppose $b=b'x_1b'',$ and $a=a'x_1a''.$ Using the Leibniz rule we can
reduce the degree of $b$ and $a$ by removing the \emph{extra} parts
$b',b'',c',c''.$ Induction on the degree of the commutators provides us with the
following transformation
\begin{equation}\label{B}
[b,l][a,c] \longrightarrow
[x_1,l]d[x_1,c],
\end{equation}
where due to the multilinearity, w.l.o.g. we set $d\in A_2$ to be a monomial. Using the Leibniz we obtain:
\begin{equation} \label{A}
[x_1,l]d[x_1,c] \longrightarrow  l[x_1,d][x_1,c]+[x_1,l
d][x_1,c] 
\end{equation}
We apply the pigeonhole identity to the RHS of Equation $\eqref{A}$ to
get products of the following kinds (we ignore scalars, signs, and due to Leibniz we can multiply by $L_1$ only to the left side):
\begin{itemize}
\item $l(x_1[x_1,d,c]+[x_1,d,x_1]c+[x_1,x_1c,d]) \in L_{k-1} \cdot M_3 \sse M_{k+1};$
\item $x_1[x_1,ld,c]+[x_1,ld,x_1]c+[x_1,x_1c,ld] \in L_1 \cdot [L_1,M_{k-1},L_1] \sse L_1 \cdot [L_1,L_{k}] \sse M_{k+1},$
\end{itemize}
because $k-1$ is odd and Bapat and Jordan's $[M_{k-1},L_1]\sse L_{k}$ holds.
\end{proof}

Note that, due to the pigeonhole principle, a simple case of this lemma, $[M_2,L_1]\sse M_3$ is true for all algebras that admit $M_2\cdot M_2 \sse M_3.$ We conjecture a stronger version of the lemma (although not particularly needed in our proof).

\begin{conjecture}
$[M_k,L_1]=L_{k+1}$
\end{conjecture}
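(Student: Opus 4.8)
The plan is to establish the two inclusions separately. The inclusion $[M_k, L_1] \subseteq L_{k+1}$ is the hard direction and should be the organizing principle of the argument; the reverse inclusion $L_{k+1} \subseteq [M_k, L_1]$ is comparatively soft. For the latter, I would note that $L_{k+1} = [A, L_k] \subseteq [A, M_k]$ (since $L_k \subseteq M_k$), and then observe that $[A, M_k] = [L_1, M_k] = [M_k, L_1]$; so once we know $[M_k,L_1]\subseteq L_{k+1}$ we automatically get equality, and in fact the reverse inclusion $L_{k+1}\subseteq [M_k,L_1]$ holds unconditionally.

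For the forward inclusion I would separate the parity of $k$. When $k$ is odd, Theorem~\ref{theorem6} (Bapat--Jordan, $[M_l, L_k] \subseteq L_{k+l}$ for odd $l$) gives $[M_k, L_1] \subseteq L_{k+1}$ immediately. So the real content is the even case. Here I would take a general generator $m = [a[b,\ell], c]$ of $[M_k, L_1]$ with $a, b, c \in A$ and $\ell \in L_{k-1}$, and run the same Leibniz expansion as in the proof of Lemma~\ref{lem1}: $m = [b,\ell][a,c] + a[[b,\ell],c]$. The term $a[[b,\ell],c] = a[b,\ell,c]$ already lies in $A\cdot L_{k+1} = M_{k+1}$, which is \emph{not} good enough — we need it in $L_{k+1}$, not merely $M_{k+1}$ — so the first step of the plan is to show that this term can be rewritten, via the Jacobi identity applied to $[b,\ell,c]$ together with $[M_{k-1},L_1]\subseteq L_k$ (valid since $k-1$ is odd), as an element of $L_{k+1}$ up to further manageable terms. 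The main work, then, is the $[b,\ell][a,c]$ term: one reduces as in \eqref{B} to the shape $[x_1,\ell]d[x_1,c]$, expands by Leibniz to $\ell[x_1,d][x_1,c] + [x_1,\ell d][x_1,c]$ as in \eqref{A}, and applies the Pigeonhole Identity. In Lemma~\ref{lem1} the resulting pieces were landed in $M_{k+1}$; the refinement needed here is to land them in $L_{k+1}$. For the piece coming from $[x_1, \ell d][x_1,c]$ this is already done in the proof of Lemma~\ref{lem1}, where it is shown to lie in $L_1\cdot[L_1, L_k]\subseteq L_{k+1}$ using odd parity of $k-1$. For the piece $\ell\,(x_1[x_1,d,c] + [x_1,d,x_1]c + [x_1,x_1c,d])$, which was only placed in $L_{k-1}\cdot M_3$, one must do more: rewrite $\ell \cdot (\text{commutator of length } 3)$ using Leibniz and Jacobi to absorb the leading $\ell\in L_{k-1}$ into the commutator, producing a length-$(k+2)$ commutator — i.e. an element of $L_{k+2}\subseteq L_{k+1}$ — modulo lower-order corrections, which are then handled by the same induction on degree.

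The key steps in order are: (i) dispense with $k$ odd via Theorem~\ref{theorem6} and record the easy reverse inclusion; (ii) for $k$ even, reduce a general generator to the two-term form $[b,\ell][a,c] + a[b,\ell,c]$ and show the second summand lies in $L_{k+1}$ using Jacobi and $[M_{k-1},L_1]\subseteq L_k$; (iii) reduce $[b,\ell][a,c]$ to $[x_1,\ell]d[x_1,c]$ by degree induction exactly as in \eqref{B}; (iv) expand via Leibniz and the Pigeonhole Identity; (v) the crucial new estimate — upgrade each resulting term from ``in $M_{k+1}$'' to ``in $L_{k+1}$'' by absorbing the factor from $L_{k-1}$ into a longer commutator via Leibniz/Jacobi, with a nested induction on normal degree. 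The main obstacle I expect is precisely step (v): the Pigeonhole Identity naturally produces products $L_{k-1}\cdot M_3$, and collapsing such a product to a single long commutator in $L_{k+1}$ is exactly the kind of statement ($M_a\cdot M_b\subseteq M_{a+b}$-style, but sharpened to the $L$-level) that the paper elsewhere treats as genuinely delicate; making this work uniformly in $k$, rather than just modulo $M_{k+1}$, is the heart of the conjecture and the reason it is stated as a conjecture rather than proved alongside Lemma~\ref{lem1}.
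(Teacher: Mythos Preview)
The paper does not prove this statement: it is explicitly recorded as a \emph{conjecture}, immediately after Lemma~\ref{lem1}, with no argument offered. So there is nothing to compare your approach against. Your own closing paragraph is accurate: the upgrade from $M_{k+1}$ to $L_{k+1}$ is precisely the missing ingredient, and nothing in your outline supplies it.

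Two concrete gaps in the outline itself. First, your claim that the $[x_1,\ell d][x_1,c]$ piece ``is already done in the proof of Lemma~\ref{lem1}, where it is shown to lie in $L_1\cdot[L_1,L_k]\subseteq L_{k+1}$'' misreads the paper: the inclusion written there is $L_1\cdot[L_1,L_k]\subseteq M_{k+1}$, not $L_{k+1}$, and indeed $L_1\cdot[L_1,L_k]=A\cdot L_{k+1}=M_{k+1}$ in general. So that piece is on exactly the same footing as the $L_{k-1}\cdot M_3$ piece you flag in step~(v); neither is known to sit in $L_{k+1}$. Second, step~(ii) has the same defect: you propose to place $a\,[[b,\ell],c]$ in $L_{k+1}$ by applying Jacobi to the inner commutator and invoking $[M_{k-1},L_1]\subseteq L_k$, but Jacobi only rewrites $[[b,\ell],c]$ as a sum of length-$(k+1)$ commutators; the external factor $a$ remains, so you are still only in $A\cdot L_{k+1}=M_{k+1}$. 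In short, every branch of your decomposition ends in $M_{k+1}$ rather than $L_{k+1}$, and the ``absorption'' move you sketch in~(v) --- pushing an $L_{k-1}$ or $L_1$ factor inside a commutator to lengthen it --- is not a known identity; collapsing $L_{k-1}\cdot M_3$ into $L_{k+1}$ would in particular imply $L_{k-1}\cdot L_3\subseteq L_{k+1}$, which is not established anywhere in the paper (Theorem~\ref{theorem6} only gives $[M_3,L_{k-1}]\subseteq L_{k+2}$, a bracket and not a product). Your reverse inclusion $L_{k+1}\subseteq[M_k,L_1]$ and the odd-$k$ case via Theorem~\ref{theorem6} are correct and easy; the even-$k$ forward inclusion remains genuinely open.
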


\begin{theorem} \label{th1}
For any $k$ and $j$ $[M_k,L_j] \sse M_{k+j}.$
\end{theorem}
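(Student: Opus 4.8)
The plan is to prove the statement by induction on $j$, with Lemma \ref{lem1} serving as the base case and the Jacobi identity used to peel one generator off the inner commutator. First note that the case $k=1$ needs no induction at all: $[M_1,L_j] = [A,L_j] = L_{j+1} \sse M_{j+1} = M_{k+j}$. Likewise the case $j=1$ with $k\ge 2$ is exactly Lemma \ref{lem1}. So the real work is the inductive step for $k,j\ge 2$, and the induction should be set up as the family of statements ``for every $k$, $[M_k,L_j]\sse M_{k+j}$'', indexed by $j$.

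For the inductive step, assume $[M_{k'},L_{j-1}]\sse M_{k'+j-1}$ holds for all $k'$, and fix $k\ge 2$. Since $L_j=[L_1,L_{j-1}]$, bilinearity reduces us to a single generator $[m,[a,\ell]]$ with $m\in M_k$, $a\in A=L_1$, and $\ell\in L_{j-1}$. The Jacobi identity, rearranged by anticommutativity, gives
$$[m,[a,\ell]] = [a,[m,\ell]] - [\ell,[m,a]].$$
For the first term, $[m,\ell]\in [M_k,L_{j-1}]\sse M_{k+j-1}$ by the inductive hypothesis, hence $[a,[m,\ell]]\in [M_{k+j-1},L_1]\sse M_{k+j}$ by Lemma \ref{lem1} (note $k+j-1\ge 2$ since $j\ge 2$). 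For the second term, $[m,a]\in [M_k,L_1]\sse M_{k+1}$ by Lemma \ref{lem1}, hence $[\ell,[m,a]]\in [M_{k+1},L_{j-1}]\sse M_{(k+1)+(j-1)}=M_{k+j}$ by the inductive hypothesis applied at $k'=k+1$. Both terms lie in $M_{k+j}$, which closes the induction.

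The only point requiring care is that the inductive hypothesis is invoked at two different first indices, $k$ and $k+1$, so it is essential that the statement being inducted on is uniform in $k$; otherwise the argument is entirely formal. I therefore do not expect a genuine obstacle: all the combinatorial content — the pigeonhole computation — was already absorbed into Lemma \ref{lem1}, and Theorem \ref{th1} follows from it by this routine Jacobi manipulation together with the identity $L_j=[L_1,L_{j-1}]$.
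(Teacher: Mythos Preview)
Your proof is correct and follows essentially the same route as the paper: induction on $j$ with Lemma \ref{lem1} as the base case, Jacobi to split $[m,[a,\ell]]$ into two terms, and the inductive hypothesis (uniform in the first index) applied at $k$ and at $k+1$. Your write-up is in fact slightly cleaner, since you state the Jacobi rearrangement correctly and make explicit both the trivial $k=1$ case and the need for uniformity in $k$.
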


\begin{proof}
We use induction on $j.$ The base case $j=1$ follows from Lemma \ref{lem1}.
Suppose the $[M_k,L_{j'}] \sse M_{k+j'}$ for any $j' \leq j$ and $k$ arbitrary.
Consider a general generator $m=[m',[a,l]]$ of $[M_k,L_{j+1}],$ where $m' \in
M_{k}$ $a \in L_1,$ and $l \in L_{j}.$ By the Jacobi identity and
anti-commutativity
$
m=[[m',a],l]+[[l,m],a].
$
From Lemma \ref{lem1} $[m',a] = 0 \mod M_{k+1},$ hence by assumption $[[m',a],l]
= 0 \mod [M_{k+1},L_j] = 0 \mod M_{k+j+1}.$ Similarly $[l,m] = 0 \mod M_{k+j},$
i.e. $[[l,m],a] = 0 \mod M_{k+j+1}.$ Hence, $m = 0 \mod M_{k+j+1}.$

\end{proof}

To complete the proof of Theorem \ref{theorem1} we have that $M_i \cdot M_j \sse M_{i+j-1}$ when $i$ or $j$
is odd (Bapat and Jordan). So assume that $i$ and $j$ are even. It suffices to
show that $L_i \cc L_j \sse M_{i+j-1}.$ Take $a \in A,$ $b \in L_{i-1}$ and $c
\in L_j.$ Then $m=[b,a]c$ is a general generator of $L_i \cc L_j.$ By Leibniz
$m=[b,ac]-a[b,c].$ Theorem \ref{th1} applied to the RHS finishes the proof.

\qed

\subsection{Generators of $M$--ideals. An application of maximal containment.}
By Theorem \ref{theorem1} we get the equality
$$
\II(A_2,(i_1,\dots,i_k))=i_1+\dots+i_k-k+1.
$$
Note that this is the first result that explicitly obtains $\II(A_2,i).$ Moreover, in this section we apply this result to find a complete classification of the set of generators of the ideals $M_i(A_2)$ for $i \geq 2.$ Please recall the definition of pure-commutator product from the preliminaries.

First we note two observations: 
\begin{itemize}
\item Pure-commutator Decomposition Property -- every commutator of any length can be presented as a linear combination of pure-commutator products of the PBW degree of the commutator. This follows by the Leibniz rule and induction. For example $[x_1,x_2x_3,x_4]=[x_1,x_2][x_3,x_4]+x_2[x_1,x_3,x_4]+[x_1,x_2,x_4]x_3+[x_2,x_4][x_1,x_3].$
\item Free Permutation Property -- the commutators in every pure-commutator product can be permuted at the cost of an error term of higher degree. This is due to the definition of the bracket and the well-known fact (follows by Jacobi identity and induction) that $[L_i,L_j]\sse L_{i+j}.$ We call it ``free permutation,'' because the error term is zero module most of the ideals in the considerations that follow. For example $[x_1,x_2][x_3,x_4] = [x_3,x_4][x_1,x_2] \mod M_3,$ because $[x_3,x_4][x_1,x_2] = [x_1,x_2][x_3,x_4]-[[x_1,x_2],[x_3,x_4]].$
\end{itemize}
\begin{theorem}[Finite Generation] \label{finitegeneration}
For $i\geq 2$ the ideal $M_i(A_2)$ is finitely generated as a two-sided ideal  by the set $\mc{S}_{i}$ of all pure-commutator products, consisting of commutators of length greater than two, that have a maximal inclusion in $M_i(A_2).$
\end{theorem}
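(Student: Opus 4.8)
The plan is to show two containments: that the two-sided ideal generated by $\mc{S}_i$ lies inside $M_i(A_2)$, which is immediate since every element of $\mc{S}_i$ lies in $M_i(A_2)$ by definition; and the reverse, that $M_i(A_2)$ is contained in the two-sided ideal $\langle \mc{S}_i\rangle$ generated by $\mc{S}_i$. The first containment is trivial, so the content is the reverse inclusion. First I would reduce to generators: since $M_i(A_2) = A \cdot L_i(A_2) \cdot A$ and $A$ multiplies into $\langle\mc{S}_i\rangle$, it suffices to show every pure commutator $\ell$ of length exactly $i$ lies in $\langle \mc{S}_i\rangle$ (by bilinearity we may take commutators with monomial slots, and a length-$i$ commutator with polynomial slots is a linear combination of pure commutators of length $i$ by multilinearity in the slots; actually $L_i$ is spanned by commutators of length $i$ with arbitrary entries, so I first use the Pure-commutator Decomposition Property to write any such commutator as a linear combination of pure-commutator products of PBW degree $\geq 1$, all of which lie in $M_i$). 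So the core claim is: every pure-commutator product $p$ lying in $M_i(A_2)$ is, modulo $M_{i+1}(A_2)$, a $\mc{K}$-combination of elements of $\mc{S}_i$, and then one closes the argument by descending induction on the filtration using that $M_{i+1}\subset\langle\mc{S}_{i+1}\rangle\subset\langle\mc{S}_i\rangle$ is false in general — instead one needs $M_{i+1}$ itself to be handled, so the induction must be on the degree.

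The right induction is on the \emph{normal degree} of $p$. Fix a pure-commutator product $p \in M_i(A_2)$ of some normal degree $d$; by Proposition~\ref{proposition1} and Theorem~\ref{theorem1} its maximal inclusion index $\II$ is determined by its tuple, namely $\sum i_p - k + 1$ where $k$ is the number of commutators and $i_p$ their lengths. If $p \in \mc{S}_i$ we are done (this is exactly the case where $p$ has all commutators of length $> 2$ and $\II(p)=i$). There are two obstructions to membership in $\mc{S}_i$: first, $p$ may contain commutators of length one (bare generators) or length two; second, $p$ may have $\II(p) > i$, meaning $p$ actually lies deep in the filtration and should be absorbed as an element of $M_{i+1}$, handled by the degree induction after rewriting. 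To dispose of length-two commutators, I would use the Pigeonhole Identity $[a,b][a,c]=a[a,b,c]+[a,b,a]c+[a,ac,b]$ together with the Free Permutation Property and the fact that $n=2$: in $A_2$, any two commutators of length two share a generator after permuting slots (using anti-commutativity, $[x_1,x_2]$ is essentially the only length-two pure commutator up to sign, since $[x_1,x_1]=[x_2,x_2]=0$), so a product of two length-two commutators can be turned, modulo higher filtration, into a sum of terms each containing a commutator of length $\geq 3$; iterating, all length-two commutators get merged into longer ones at the cost of higher-degree error terms absorbed by the induction. To dispose of bare generators, I would use the Leibniz law / the Decomposition Property in reverse: a bare $x_j$ adjacent to a commutator $[x_1,\dots]$ can be pulled into the commutator, $x_1\cdot[x_1,w] = [x_1, x_1 w] - [x_1,w]\cdot x_1 \cdots$, trading leading factors for longer commutators modulo rearrangement — again with error terms controlled by degree.

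The main obstacle I expect is the bookkeeping that closes the induction: after all these rewrites, a pure-commutator product $p$ with bare generators or length-two commutators becomes a $\mc{K}$-combination of pure-commutator products $q$ with \emph{all} commutators of length $\geq 3$, plus error terms of strictly higher normal degree. Each such $q$ has $\II(q) = (\text{sum of lengths}) - (\text{number of commutators}) + 1$; one must check $\II(q) \geq i$ (so $q\in M_i$), and if $\II(q) = i$ then $q \in \mc{S}_i$ by definition, while if $\II(q) > i$ then $q \in M_{i+1}(A_2)$, and I would need $M_{i+1}(A_2) \subset \langle \mc{S}_i\rangle$. This last inclusion is the genuinely delicate point: it does not follow formally from $M_{i+1}\subset\langle\mc{S}_{i+1}\rangle$ because $\mc{S}_{i+1}$ need not lie in $\langle\mc{S}_i\rangle$. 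I would handle it by a separate downward step — showing directly that each element of $\mc{S}_{i+1}$, being a pure-commutator product of commutators of length $\geq 3$ with maximal inclusion $i+1$, can be multiplied on one side by a single generator (raising the normal degree by one, lowering $\II$ by one via the Free Permutation and Leibniz manipulations) to land in $\mc{S}_i$ modulo $M_{i+1}$, and then iterate; equivalently, one shows $M_i / M_{i+1}$ is spanned by the images of $\mc{S}_i$ using that multiplying a generator $x_1$ into a pure commutator $[x_1,\ell]$ of length $r$ produces, modulo $M_{r+1}$, nothing — so instead the reduction goes through products — and that for products the extra generator genuinely decreases $\II$. Once this ``one generator lowers $\II$ by one, stays among pure-commutator products of long commutators'' lemma is in hand, the descending recursion terminates because $\II$ cannot drop below the point where all commutators would need length $< 3$, which is precisely where $\mc{S}_i$ is defined, and finiteness of $\mc{S}_i$ follows from the bound $\II(p) = i$ pinning the total degree $\sum i_p \leq i + k - 1$ with $k$ bounded since each $i_p \geq 3$.
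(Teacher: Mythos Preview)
Your overall plan---decompose into pure-commutator products, push bare generators to the outside, land in $\mc{S}_i$---is the paper's plan too, but two things in your execution do not work.

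First, you read ``length greater than two'' literally as $\geq 3$ and then spend effort eliminating length-two commutators via the Pigeonhole Identity. The paper's own proof never verifies any length-$\geq 3$ condition: once bare generators are pushed to the ends, it simply declares the middle product $\ell_1\cdots\ell_q$ (each $\ell_j$ of length $\geq 2$) with $\II=i$ to lie in $\mc{S}_i$. With the literal reading $\mc{S}_2=\emptyset$ and the theorem fails for $i=2$, so ``greater than two'' is evidently a slip for ``at least two.'' Your Pigeonhole step is therefore unnecessary.

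Second, and this is the real gap, your induction runs the wrong way. You induct on normal degree and repeatedly discard ``error terms of strictly higher normal degree,'' expecting the induction to absorb them; but higher degree is the direction you have not yet handled, and there is no upper bound, so nothing terminates. This is exactly why you find yourself needing $M_{i+1}\subset\langle\mc{S}_i\rangle$ and then sketching a further recursion to get it---the argument has become circular.

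The paper avoids this entirely by inducting on the pair $(q,s)$ instead of on degree. After decomposition and Free Permutation one has (monomial)$\cdot\ell_1\cdots\ell_q\cdot$(monomial); the outer monomials are absorbed by the two-sided ideal structure, and the middle has $s=\sum i_j - q + 1 \geq i$. If $s=i$ the product is in $\mc{S}_i$ and we are done. If $s>i$, write $\ell_1=[a,\ell]$ with $a$ a single generator and expand $\ell_1\ell_2\cdots\ell_q = a\,\ell\ell_2\cdots\ell_q - \ell a\,\ell_2\cdots\ell_q$; pushing $a$ to an end, the main terms have the same $q$ but $s$ dropped by one, while every Free-Permutation error term has $q$ dropped by one. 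The base case $q=1$ is a single pure commutator of length $s\geq i$, which lies in $A_2\cdot[a_{s-i+1},\dots,a_s]\cdot A_2\subset\langle\mc{S}_i\rangle$ by iterated expansion of the outer brackets. No appeal to $M_{i+1}\subset\langle\mc{S}_i\rangle$ is ever made.
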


\begin{proof}
It suffices to show that 
$$
\langle \mc{S}_i \rangle := \{ a\ell b \mid a,b \in A_2, \ell \in \mc{S}_i\}=M_i(A_2).
$$
Take an element $m=a\ell b$ from $\langle \mc{S}_i \rangle$ $a,b \in A_2,$ $\ell \in M_{i}(A_2).$ By definition $\ell \in M_i(A_2),$ hence by definition of $M_i(A_2)$ $m \in A_2M_i(A_2)A_2=M_i(A_2).$ Therefore, $\langle \mc{S}_i \rangle \sse M_i(A_2).$

To show the reversed inclusions take an element $m \in \mc{S}_i.$ Through the Pure-commutator Decomposition we present this element as a linear combination of pure-commutator products. Without loss of generality, we consider one such product and use the Free Permutation to present it of the form 
$$
m = \al (x_{j_1}x_{j_2}\cdots x_{j_p})\ell_1 \ell_2 \cdots \ell_q (x_{k_1}x_{k_2}\cdots x_{k_r}),
$$
where $\al \in \mc{K},$ $\ell_s$ is a pure commutator of length $i_s$ for $1 \leq s \leq q,$ and $p,q,r$ are some finite positive integers. We ignore the scalar $\al$ as $\mc{K} \sse A_2.$ Note that the transformed $m$ is a pure-commutator product of $p+q+r$ pure commutators, and its associated tuple is $(1^q,i_1,\dots,i_s,1^r).$ Hence, due to the PBW Argument, $m$ is maximally included in $M_{i_1+\cdots+i_q+q+r-(p+q+r)+1}=M_{i_1+\cdots+i_q-q+1}.$ 

Now we consider cases for the sum $s = i_1+\cdots+i_q-q+1:$
\begin{description}
\item[$s < i$]{Then $m$ is not in $M_i(A_2),$ which is a contradiction with its choice;}
\item[$s = i$]{It means that $m$ is a pure-commutator product that has a maximal inclusion in $M_{\sum}=M_{i},$ i.e. $m \in \langle \mc{S}_i \rangle;$ }
\item[$s > i$]{We argue that we can reduce it to the case $s=i.$ Since an element from $\langle \mc{S}_i \rangle$ is multiplied on both sides by $A_2,$ w.l.o.g. we consider only 
$\ell_1 \ell_2 \cdots \ell_q.$ Due to the same reason, that we can multiply by $A_2$ elements to the sides, it suffices to consider the case $$
\II(A_2,(i_1,\dots,i_q))>i \text{ and }  \II(A_2,(i_2,\dots,i_q))<i.$$ But, since the length of $\ell_1$ is greater than one, we can present it as $\ell_1=[a,\ell],$ where $a \in A_2$ and $\ell$ is another pure-commutator. Then by simple expansion we get 
$$
\ell_1 \ell_2 \cdots \ell_q = a\ell \ell_2 \cdots \ell_q - \ell a \ell_2 \cdots \ell_q,
$$
but through Free Permutation we can ``push'' the element $a$ in the second summand to the farthest left to obtain
$$
a\ell \ell_2 \cdots \ell_q - \ell \ell_2 \cdots \ell_q a.
$$
This sum evaluates under the function $\II$ to $s-1.$ We permuted commutators in exchange of an error term, i.e. we get the combination of the two commutators, which is a pure commutator of the combined degree. However, in this case we reduce $q$ by one, so we can proceed with double induction on $q$ and $s.$ The base of the induction is $q=1$ and $s = \ell_1.$ Then we have the commutator $[a_1,\dots,a_{s}]=[a_1,\dots,a_{s-i},[a_{s-i+1},\dots,a_s]] \in A_2 \cdot \{[a_{s-i+1},\dots,a_s]\} \cdot A_2 \subset \langle \mc{S}_i \rangle,$ obviously. We repeat until we get to $s=i.$
}
\end{description}

\end{proof}

By convention we set $S_1:=\{1\}.$ Obviously this generates $M_1\equiv A$ as a two-sided ideal.
\subsection{An application to Universal Lie Nilpotent Associative Algebras}
Universal Lie Nilpotent Associative algebras are first introduced by Etingof, Kim and Ma \cite{EKM}, and are defined by $Q_{i}(A_n):=A_n/M_i(A_n).$ The algebra $Q_i$ is the most free algebra that satisfies $[a_1,\dots,a_i]=0.$ The paper motivates the need to study the $Q$-algebras, because they help us better understand Lie nilpotent algebras $B$ of any index $i$ (i.e. $M_{i+1}(B)=0$). In particular, Etingof, Kim and Ma give a description of $Q_4(A_n)$ in terms of generators and relations. We follow the ideas in the paper to obtain a description of $Q_i(A_2)$ for any $i\geq 2.$

\begin{theorem}
For any $i\geq 2$ the ideal $M_i(A_2)$ is generated by the Lie relations $P = 0$ for any $P \in \mc{S}_i.$ Moreover, $Q_i(A_2)$ and $A_2/\langle  \mc{S}_i \rangle$ are isomorphic (as vector spaces).
\end{theorem}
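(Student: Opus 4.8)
The plan is to deduce this statement directly from the Finite Generation theorem (Theorem \ref{finitegeneration}), together with the definition $Q_i(A_2) = A_2/M_i(A_2)$ of the universal Lie nilpotent algebra; no new computation is needed.

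First I would record the reformulation of Theorem \ref{finitegeneration} in the language of presentations. In a free associative algebra, imposing a family of relations $\{P = 0 : P \in \mc{S}_i\}$ means passing to the quotient by the two-sided ideal they generate, namely $\langle \mc{S}_i \rangle$ in the notation of Theorem \ref{finitegeneration}. By that theorem this ideal equals $M_i(A_2)$, which is exactly the first assertion. One should also note that every $P \in \mc{S}_i$ genuinely is a ``Lie relation'': by the definition of a pure-commutator product, $P$ is a product of pure commutators, hence a product of elements of $\mf{g}_2$, so the relation $P = 0$ lies in the associative span of Lie words — precisely the kind of relation one is entitled to impose when building a Lie-nilpotent quotient.

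For the ``moreover'' part I would simply substitute. By definition $Q_i(A_2) = A_2/M_i(A_2)$, and by Theorem \ref{finitegeneration} $M_i(A_2) = \langle \mc{S}_i\rangle$; hence $Q_i(A_2)$ and $A_2/\langle \mc{S}_i\rangle$ are literally the same quotient of $A_2$. In particular the identity map on $A_2$ descends to an isomorphism of associative algebras $Q_i(A_2) \cong A_2/\langle \mc{S}_i\rangle$, which is a fortiori an isomorphism of $\mc{K}$-vector spaces. If one wants an explicit basis for this quotient, it can be read off from the PBW grading of Section \ref{grading}: a basis is given by those PBW monomials whose associated tuple $(i_1,\dots,i_k)$ satisfies $i_1 + \dots + i_k - k + 1 < i$, i.e. those not already killed inside $M_i(A_2)$ by the PBW argument (Lemma \ref{lemma1}).

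The only point requiring care — and where the genuine work sits — is the nontrivial inclusion $M_i(A_2) \subseteq \langle \mc{S}_i\rangle$ of Theorem \ref{finitegeneration}, established via the Pure-commutator Decomposition, Free Permutation, and the double induction on $q$ and $s$; once that is available, the present statement is formal. So I do not expect a real obstacle here beyond being precise about what ``generated by the relations $P = 0$'' means and checking that the members of $\mc{S}_i$ qualify as Lie relations, both of which are immediate from the definitions.
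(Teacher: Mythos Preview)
Your core argument is correct and is more direct than the paper's. Once Theorem \ref{finitegeneration} has established $\langle \mc{S}_i\rangle = M_i(A_2)$ as two-sided ideals, the identity $Q_i(A_2)=A_2/M_i(A_2)=A_2/\langle \mc{S}_i\rangle$ is literal, and the first sentence of the theorem is a restatement of that equality. Nothing further is needed.

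The paper instead follows the Etingof--Kim--Ma template: it sets $C_i^{(2)}:=A_2/\langle\mc{S}_i\rangle$, notes the surjection $\theta\colon C_i^{(2)}\twoheadrightarrow Q_i(A_2)$ coming from the easy inclusion $\langle\mc{S}_i\rangle\subseteq M_i(A_2)$, and then argues injectivity by proving that every length-$(i{-}1)$ commutator is \emph{central} in $C_i^{(2)}$, so that length-$i$ commutators vanish there and $M_i(A_2)\subseteq\langle\mc{S}_i\rangle$. The centrality step uses Theorem \ref{finitegeneration} at level $i{-}1$ together with the Pure-Commutator Decomposition. In effect this is a second, inductive proof of the hard inclusion already contained in Theorem \ref{finitegeneration}; it buys nothing logically over your substitution, but it does make transparent the ``universal Lie nilpotent'' interpretation (generators of $M_{i-1}$ become central modulo $\mc{S}_i$) that motivates the $Q_i$ notation in \cite{EKM}.

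One caution on your final paragraph. The aside claiming an explicit PBW basis for $Q_i(A_2)$---the ordered PBW monomials with $i_1+\cdots+i_k-k+1<i$---goes beyond what the paper proves. One direction is fine: such a monomial with $i_1+\cdots+i_k-k+1\geq i$ lies in $M_i(A_2)$ by Theorem \ref{theorem1} (the length-one factors contribute nothing to the count). But the converse, that $M_i(A_2)$ is \emph{spanned} as a vector space by those PBW monomials, is strictly stronger than generation as a two-sided ideal and is not established anywhere in the paper; nor does Lemma \ref{lemma1} give it, since the PBW argument bounds $\II$ from above, not below. Drop that sentence or label it as a separate claim requiring its own proof; it is not needed for the theorem.
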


\begin{proof}
Let $C_i^{(2)}:=A_2/\langle  \mc{S}_i \rangle.$ All of the relations $P=0$ follow from Theorem \ref{theorem1}. Hence, there is a natural surjective homomorphism $\theta: C_i^{(2)} \twoheadrightarrow Q_i(A_2).$ We are left to show that $\theta$ is an isomorphism. It suffices to prove that $[a_1,\dots, a_{i-1}]$ is a central element in $C_i^{(2)}.$ We have that $[a_1,\dots, a_{i-1}] = 0$ in $Q_{i-1}(A_2).$ Therefore, $[a_1,\dots, a_{i-1}] \in M_{i-1}(A_2).$ Hence, we need to show that the generators of $M_{i-1}(A_2)$ are central. We know the generating set by Theorem \ref{finitegeneration}. Thus, take $P' \in \mc{S}_{i-1}.$ W.l.o.g. to prove centrality we consider only $[P',x_1].$ Due to the Pure-Commutator Decomposition Property we expand $[P',x_1]$ as a linear combination of pure-commutator products $P,$ where $P$ is obtained from $P'$ by extending one of the commutators by one slot, i.e. $P$ evaluates to $i$ under $\II$. Hence, $[P',x_1]=0$ in $C_i^{(2)},$ which completes the proof.
\end{proof}

\section{Discussion about maximal containment for $n=3.$} \label{discussion}
This is a good place to recap the main results of the paper so far. On the one hand, we gave the PBW argument that presents us with an upper bound on $\II(A_n,i).$ On the other hand, for the special case of $n=2$ we used the Pigeonhole identity to set a lower bound on $\II(A_2,i)$ and saw that this is sharp. This lead us to the maximal inclusions of lower central series ideals for $n=2.$ 

It is tempting to find a sharp lower bound for $n=3$ too. Indeed, the Pigeonhole identity gives us $M_2(A_3) \cdot M_2(A_3) \sse M_3(A_3).$ Moreover, by Theorem \ref{theorem2} we have that $M_i \cdot M_j \sse M_{i+j-1}$ for $i$ or $j$ odd. Hence, it suffices to consider $i$ and $j$ even. The first open question is whether $M_2(A_3) \cdot M_4(A_3)$ lives in $M_5(A_3).$
\begin{itemize}
\item $[x,y][z,[x,[x,y]]] \in M_5$ (this is due to Jacobi identity applied on the second commutator and the Pigeonhole identity); 
\item $[x,y][z,y,x,y] \in M_5:$ The proof is as follows. Expanding the second commutator via Jacobi identity we get (up to sign) $[[x,y],[z,y]]$ and $[y,[[x,y],z].$ The latter commutator of the expansion allows us to use the Pigeonhole identity. The former is a bit trickier. 
Consider $[x,y][[x,y],[z,y]].$ Note that, due to the Leibniz rule, $a[a,b]=[a,ab]$ for any $a$ and $b.$ Then $[x,y][[x,y],[z,y]] = [[x,y],[x,y][z,y]] \in [L_2,M_3] \subset M_5.$ We used the Pigeonhole identity and Bapat and Jordan's observation. \qed
\item One can easily give an argument (reduction of degree via the Leibniz rule) that to prove $M_i\cdot M_j \sse M_{k}$ for some $i,j,k$ and any algebra admitting the Leibniz rule, it is sufficient to do it for pure commutators only.
\end{itemize}

The stumble is the following commutator $[x,y][z,z,z,x].$ Is it in $M_5?$ We tried to use the trick from the bullet points but we end up with the same thing. The problem is that the common generator, $x,$ is ``farther down to the left.'' The ultimate goal is to bring it as close as possible to the other $x$ so that we can apply the Pigeonhole identity. We believe this is not a trivial task, and maybe more identities would be needed. 
 
However, computer calculations for small degree tell us that $[x,y][z,z,z,x]$ is in $M_5.$ More generally we conjecture the following: 

\begin{conjecture}
Given the $k$-tuple
$i=(i_1,\dots,i_k),$ such that $i_p$ is greater than or equal to two for
$p=1,\dots ,k$ the following holds
$$
\II(A_3,i)=i_1+\dots+i_k-k+1.
$$
\end{conjecture}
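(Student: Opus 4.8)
The plan is to mirror the $n=2$ argument of Theorem~\ref{theorem1}. The upper bound $\II(A_3,i)\leq 1-k+\sum_p i_p$ is already unconditional by Proposition~\ref{proposition1}, so the only task is the matching lower bound: for even $i,j$ one must prove $L_i(A_3)\cdot L_j(A_3)\subset M_{i+j-1}(A_3)$, and then bootstrap to arbitrary tuples exactly as in the final paragraph of the proof of Theorem~\ref{theorem1} (using a three-variable analogue of Theorem~\ref{th1}). Reductions via the Leibniz rule (noted in the third bullet of Section~\ref{discussion}) show it suffices to treat a product of two pure commutators $[x,y,\dots]\,[z,w,\dots]$, and by Theorem~\ref{theorem2} we need only the case where all generators appearing are among two symbols (if a commutator uses a generator not appearing in the other, an anti-commutativity/Leibniz move folds the problem into a lower-length instance). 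So the heart of the matter is a bound of the shape $[x_1,\ell][x_1,c]\in M_{k+1}$ when $\ell,c$ are pure commutators whose ``far'' generator $x_1$ may be buried deep inside $c$.

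The key new ingredient is a replacement for Lemma~\ref{lem1}: an identity or sequence of identities that moves a buried common generator up toward the head of a commutator at the cost of error terms lying in strictly deeper ideals. Concretely, I would look for an analogue of the Pigeonhole Identity $[a,b][a,c]=a[a,b,c]+[a,b,a]c+[a,ac,b]$ that applies when the second $a$ sits in an inner slot of $c=[c_1,\dots,c_m]$ with $c_t=a$ for some $t>1$. The idea: repeatedly apply Jacobi to ``rotate'' $a$ toward the outermost slot of $c$, each rotation producing a sum of commutators where either (i) $a$ has moved strictly closer to the head, or (ii) an inner commutator has absorbed an extra slot (raising its ideal membership), combined with the facts $[L_i,L_j]\subset L_{i+j}$ and Bapat--Jordan's $[M_{l},L_1]\subset L_{l+1}$ for odd $l$. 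One then runs an induction on the ``depth'' of the buried generator, with the Pigeonhole Identity itself as the base (depth~$1$), just as Lemma~\ref{lem1} does for $n=2$.

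After establishing such a depth-reduction lemma, the rest is formal. I would prove $[M_k,L_j](A_3)\subset M_{k+j}(A_3)$ by the same Jacobi-plus-induction argument as Theorem~\ref{th1} (the proof there uses only Lemma~\ref{lem1}, the Jacobi identity, and anti-commutativity, none of which is specific to two variables), then deduce $L_i\cdot L_j\subset M_{i+j-1}$ for even $i,j$ from $m=[b,a]c=[b,ac]-a[b,c]$ as in the closing lines of Theorem~\ref{theorem1}, and finally extend to all $k$-tuples by the pigeonhole-style splitting used there.

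\textbf{The main obstacle.} The hard part is precisely the depth-reduction lemma --- i.e.\ handling $[x,y][z,z,z,x]$ and its kin, the example flagged as ``the stumble'' in Section~\ref{discussion}. When $n=2$, a common generator between $b$ and $a$ always exists and, after anti-commutativity, can be taken as the \emph{head} of both commutators, so a single application of the Pigeonhole Identity suffices. For $n=3$ the common generator can be forced into an interior slot of arbitrary depth, and it is not clear that Leibniz, Jacobi, and the Pigeonhole Identity alone generate enough relations to pull it out while controlling all error terms; new commutator identities (or a more clever induction ordering) may be required, which is why this is stated only as a conjecture. A natural first milestone would be to settle the minimal open case $M_2(A_3)\cdot M_4(A_3)\subset M_5(A_3)$ unconditionally, which by the bullet points above reduces to the single commutator product $[x,y][z,z,z,x]$.
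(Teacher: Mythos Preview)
This statement is a \emph{conjecture} in the paper, not a theorem: the paper offers no proof, only partial evidence (the first two bullet points of Section~\ref{discussion}) and an explicit admission that the case $[x,y][z,z,z,x]$ resists the available identities. So there is no ``paper's own proof'' to compare against, and the relevant question is whether your proposal actually closes the gap. It does not, and you say so yourself: your ``depth-reduction lemma'' is precisely the missing piece, and you correctly identify it as the obstacle rather than supplying it. What you have written is an accurate roadmap of how a proof \emph{would} go if such a lemma existed --- the bootstrapping via the Jacobi induction of Theorem~\ref{th1} and the final Leibniz splitting $[b,a]c=[b,ac]-a[b,c]$ are indeed variable-count-agnostic --- but the roadmap is not a proof.

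One further remark: your reduction ``by Theorem~\ref{theorem2} we need only the case where all generators appearing are among two symbols'' is not justified and appears to be false as stated. What pigeonhole gives you in $A_3$ is that any two nonzero pure commutators of length $\geq 2$ share at least one generator (each must involve at least two of the three generators); it does \emph{not} let you assume the product lives in a two-generator subalgebra. The paper's own stumbling block $[x,y][z,z,z,x]$ genuinely involves all three generators, with the shared generator $x$ buried at maximal depth in the second factor --- exactly the configuration your depth-reduction lemma would have to handle. Settling even this single product (hence $M_2(A_3)\cdot M_4(A_3)\subset M_5(A_3)$) is, as you note, the natural first milestone, and it remains open in the paper.
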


The significant connection to the case $n=2$ is that we have a small amount of generators, which allows us to have a common generator between two commutators.

\section{An approach to non-containment} \label{approach}
In order to improve the classification of lower central series containments, we
need to find $\II(A_n,(i,j))$ when $i$ and $j$ are both even. We know that in
this case $M_i \cdot M_j \subset M_{i+j-2}.$ 
Etingof, Kim, and Ma ~\cite{EKM} conjectured that---generally---
$M_i \cdot M_j \subset M_{i+j-1}$ if and only if $i$ or $j$ is odd (see Conjecture 3.6 in the arXiv paper). In 2010 Bapat and Jordan \cite{BJ} prove 
the ``if'' part of the statement (see Corollary 1.4 in the arXiv paper). Hence, the conjecture boils down to showing that $\II(A_n,(i,j))=i+j-2$ when 
$i$ and $j$ are both even. This is an interesting glitch of the general pattern. An approach might be to seek an associative algebra which does not admit the desired inclusion. If the algebra is chosen \emph{properly}, by a standard mapping to the free associative algebra the conjecture would hold. In what follows we demonstrate our approach to the problem via the Lie algebra $\mathfrak{sl}(2)$ with its usual basis
$$
e=\begin{pmatrix}
0 & 1 \\
0 & 0
\end{pmatrix},
f=\begin{pmatrix}
0  & 0\\
-1 & 0
\end{pmatrix},
h=\begin{pmatrix}
1 & 0 \\
0 & -1
\end{pmatrix},
$$
and relations
$$
[h,e]=2e, [h,f]=-2f, [e,f]=h.
$$

\begin{theorem} \label{thmzz}
The following holds:
        $$M_i(A_n) \cdot M_j(A_n)$$
is not contained in $M_{i+j}(A_n)$ when $i$ and $j$ have the same parity for
$n\geq 2.$ Furthermore, the statement holds
when $i$ and $j$ have different parity, but $n\geq 3.$
\end{theorem}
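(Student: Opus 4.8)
The plan is to detect the non-containment by evaluating $A_n$ in the associative algebra $\mathrm{Mat}_2(\mc{K})$, which contains $\mf{sl}_2=\spa\{e,f,h\}$, and then using the trace. Fix the algebra homomorphism $\rho\colon A_n\to\mathrm{Mat}_2(\mc{K})$ given by $\rho(x_1)=e$, $\rho(x_2)=f$, and, when $n\geq 3$, $\rho(x_3)=h$ (any further generators may be sent to $0$). It is enough to exhibit \emph{pure} commutators $\ell_1$ of length $i$ and $\ell_2$ of length $j$ in $A_n$ with $\ell_1\ell_2\notin M_{i+j}(A_n)$: since $\ell_1\in L_i\sse M_i$ and $\ell_2\in L_j\sse M_j$, this gives $M_i(A_n)\cc M_j(A_n)\nsubseteq M_{i+j}(A_n)$.

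First I would reduce to a trace computation. The product $\ell_1\ell_2$ is standard-homogeneous of degree $i+j$, while $M_{i+j}(A_n)=A_nL_{i+j}(A_n)A_n$ is spanned by products $ucv$ with $u,v$ monomials and $c$ a length-$(i+j)$ commutator of monomials; such a product has standard degree at least $i+j$, with equality only if $u,v\in\mc{K}$ and $c$ is a pure commutator of length $i+j$. Hence the degree-$(i+j)$ component of $M_{i+j}(A_n)$ is exactly the span of pure commutators of length $i+j$, so $\ell_1\ell_2\in M_{i+j}(A_n)$ would force $\ell_1\ell_2=\sum_t\al_t c_t$ with each $c_t$ a pure commutator of length $i+j$. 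Applying $\rho$ and using that each $\rho(c_t)$ is an iterated matrix commutator, hence traceless, we get $\mathrm{tr}\bigl(\rho(\ell_1)\rho(\ell_2)\bigr)=\sum_t\al_t\,\mathrm{tr}\,\rho(c_t)=0$. So it suffices to choose pure commutators $\ell_1,\ell_2$ of lengths $i,j$ with $\mathrm{tr}\bigl(\rho(\ell_1)\rho(\ell_2)\bigr)\neq 0$.

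For $n\geq 3$ this works uniformly: take $\ell_1=[x_3,\dots,x_3,x_1]$ with $i-1$ copies of $x_3$ and $\ell_2=[x_3,\dots,x_3,x_2]$ with $j-1$ copies of $x_3$, so that $\rho(\ell_1)=(\mathrm{ad}\,h)^{i-1}(e)=2^{i-1}e$ and $\rho(\ell_2)=(\mathrm{ad}\,h)^{j-1}(f)=(-2)^{j-1}f$, whence $\mathrm{tr}\bigl(\rho(\ell_1)\rho(\ell_2)\bigr)=2^{i-1}(-2)^{j-1}\,\mathrm{tr}(ef)\neq 0$ for every $i,j$, whatever their parities. For $n=2$ and $i,j$ of the same parity, I would build by the recursion $\ell^{(1)}=x_1$, $\ell^{(k)}=[x_2,\ell^{(k-1)}]$ for $k$ even and $\ell^{(k)}=[x_1,\ell^{(k-1)}]$ for $k$ odd, a pure commutator of length $k$ whose $\rho$-image is a nonzero multiple of $h$ for $k$ even and of $e$ for $k$ odd (the two inductive steps amount to $[e,h]=-2e\neq 0$ and $[f,e]=-h\neq 0$, both consequences of the displayed relations); let $\hat\ell^{(k)}$ be the analogue with the roles of $x_1$ and $x_2$ interchanged, with $\rho$-image a nonzero multiple of $h$ ($k$ even) or of $f$ ($k$ odd). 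For $i,j$ both odd put $\ell_1=\ell^{(i)}$, $\ell_2=\hat\ell^{(j)}$ and use $\mathrm{tr}(ef)\neq 0$; for $i,j$ both even put $\ell_1=\ell^{(i)}$, $\ell_2=\ell^{(j)}$ and use $\mathrm{tr}(h^2)=2\neq 0$. These constructions only involve $x_1,x_2$, so they are valid in $A_n$ for all $n\geq 2$.

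The hard part, and the reason the $n=2$ statement is restricted to equal parities, is the realizability in the last step, which is governed by the $\mathrm{ad}\,h$-weight grading of $\mf{sl}_2$ ($e,f,h$ carrying weights $2,-2,0$): using only $x_1,x_2$, the image of a pure commutator of length $k$ is a weight vector of weight $\equiv 2k\pmod 4$, hence a multiple of $h$ when $k$ is even and of $e$ or $f$ when $k$ is odd. Consequently, if $i\not\equiv j\pmod 2$ then $\rho(\ell_1)\rho(\ell_2)$ has nonzero weight, so it is nilpotent and its trace vanishes for every choice of $\ell_1,\ell_2$ — this evaluation cannot see the mixed-parity case, which is precisely why a third generator of weight $0$ (namely $h$) is brought in and the hypothesis $n\geq 3$ enters. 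What remains is routine: the standard-degree bookkeeping of the second step and the elementary check that the explicit iterated brackets displayed above are nonzero.
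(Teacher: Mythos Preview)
Your proof is correct and follows essentially the same strategy as the paper's: evaluate $A_n$ in $\mathrm{Mat}_2(\mc{K})$ via $\mf{sl}_2$, observe that the degree-$(i+j)$ part of $M_{i+j}$ is spanned by pure commutators and hence maps to traceless matrices, and then exhibit a product of pure commutators with nonzero trace. The only cosmetic difference is the choice of evaluation and test elements---the paper sends $x_1\mapsto e+f$, $x_2\mapsto h$ and uses $\mathrm{ad}^{i-1}(x_2)(x_1)\cdot\mathrm{ad}^{j-1}(x_2)(x_1)$ for the same-parity case (and $x_1\mapsto e$, $x_2\mapsto h$, $x_3\mapsto f$ for the mixed-parity case), whereas you send $x_1\mapsto e$, $x_2\mapsto f$, $x_3\mapsto h$ and build alternating commutators---but the idea is identical.
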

\begin{proof}
Consider the homomorphism
$$
\varphi: A_2 \longrightarrow \mathrm{Mat}_2(\mathcal{K})
$$
defined by $\varphi(x_1)=e+f$ and $\varphi(x_2)=h.$ We extend $\varphi$ to a
homomorphism
$$
\varphi: A_n \longrightarrow \mathrm{Mat}_2(\mathcal{K}).
$$
Since the lowest degree elements lie in the $L$ elements it suffice to show that
the product $L_i(A_n) \cdot L_j(A_n)$ is not contained in $L_{i+j}(A_n).$ Take
$$
\l=\mathrm{ad}^{i-1}(x_2)(x_1)\cdot \mathrm{ad}^{j-1}(x_2)(x_1) \in L_i \cdot
L_j.
$$
Moreover, $\varphi(\l)=\mathrm{ad}^{i-1}(h)(e+f)\cdot
\mathrm{ad}^{j-1}(h)(e+f).$
Using the relations of $\mathfrak{sl}(2)$ and induction we obtain the following
identities:
\[ \mathrm{ad}^{k}(h)(e+f) = \left\{ \begin{array}{ll}
         2^{k}(e-f) & \mbox{for odd $k$};\\
         2^{k}(e+f) & \mbox{for even $k$}.\end{array} \right. \]
Hence when $i$ and $j$ are both odd $\l$ maps to $2^{i+j}(e-f)^2$, and thus
$\mathrm{trace}(\varphi(l))=2^{i+j+1}.$ Furthermore, when $i$ and $j$ are both
even, $\l$ maps to $2^{i+j}(e+f)^2,$ which also has nonzero
$\mathrm{trace}(\varphi(\l))=-2^{i+j+1}.$
However, since commuting implies zero trace, all elements of $L_{i+j}(A_n)$ are
mapped to matrices with trace equal to zero. This argument proves the statement
for $i$ and $j$ of the same parity.

Moreover, if $i$ and $j$ are of different parity and $n\geq 3$, we can take the
same homomorphism $\varphi,$ but we map the generators differently:
$$
\varphi(x_1)=e, \varphi(x_2)=h, \varphi(x_3)=f.
$$
Now, the element $\mathrm{ad}^{i-1}(x_2)(x_1)\mathrm{ad}^{j-1}(x_2)(x_3)$ maps
to a matrix that does not have trace zero.
\end{proof}

\begin{remark}
The PBW argument (Proposition \ref{proposition1}) is a stronger result, but the method in Theorem \ref{thmzz} is demonstrative. Furthermore, Deryabina and Krasilnikov ~\cite{kdrumen} recently proved the ``only if'' part of the conjecture by showing---more generally---that $M_i(\mc{K}'\langle X \rangle) \cdot M_j(\mc{K}' \langle X \rangle) \nsubseteq M_{i+j-1} ( \mc{K}' \langle X \rangle )$ if both $i$ and $j$ are even, where $\mc{K}'\langle X \rangle$ is the free unital associative algebra over $\mc{K}'$ freely generated by the (infinite and countable) set of generators $X=\{x_1,x_2,\dots\}.$ Note that $\mc{K}'$ can be \emph{any} field.
In the special case $\mc{K}':=\mc{K}$ (of characteristic zero), taking a finite set of generators, they show that to prove the conjecture it suffices to use the algebra $$\mc{A}:= E \otimes E_{i+j-4},$$
where $E$ is the infinite-dimensional unital Grassmann algebra and $E_{i+j-4}$ is the unital Grassmann sub-algebra generated by the first $i+j-4$ generators. In fact the authors prove that $M_{i+j-1}(\mc{A})$ is zero, while $M_i(\mc{A}) \cdot M_j(\mc{A})$ is not. This statement fails for small numbers $n$ of variables. In particular it requires $n \geq i + j.$

\end{remark}

We conclude with a conjecture for the general case of tuples, consisting only of two's, i.e. $(2,\dots,2)\in \bb{N}^k.$ We believe this is true due to combinatorial considerations. We checked the validity of the conjecture for small cases.

\begin{conjecture}
Given the $k$-tuple $j:=(2,\dots,2)\in \bb{N}^k$ consisting only of two's  
\[ \II(A_n,j) = \left\{ \begin{array}{ll}
         \max\left \{ 2\left \lceil \frac{k}{2}-\frac{n}{4} \right \rceil, 0 \right \} + 2 & n \geq 4 \text{ and } n \text{ even; }\\
         \max \left \{ 2\left \lceil \frac{k}{2}-\frac{n}{4}+\frac{1}{4} \right \rceil, 0 \right \} + 2 & n \geq 5 \text{ and } n \text{ odd; }\\
         k + 1 & \text{otherwise.}\end{array} \right. \]
\end{conjecture}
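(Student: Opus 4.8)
The plan is to prove matching bounds $\II(A_n,j)\le f$ and $\II(A_n,j)\ge f$, where $f=f(n,k)$ denotes the claimed value. By the Leibniz-rule reduction noted in Section~\ref{discussion}, it suffices to work with products $\ell_1\cdots\ell_k$ of $k$ pure commutators of length two: for the lower bound we must show every such product lies in $M_f$, and for the upper bound we must exhibit one escaping $M_{f+1}$. To each product attach the multigraph $G$ on the vertex set $\{1,\dots,n\}$ having, for every factor $\ell_m=[x_{a_m},x_{b_m}]$, the edge $\{a_m,b_m\}$ (no loops, since $[x_i,x_i]=0$). By the Free Permutation Property the factors may be reordered, modulo terms of strictly higher degree hence of higher $M$-index, so that factors belonging to one connected component of $G$ sit next to one another; the analysis then splits into a ``per-component'' part and a ``gluing'' part.

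For one component I would prove a merging lemma: \emph{if the multigraph of $\ell_1,\dots,\ell_e$ is connected, then $\ell_1\cdots\ell_e\in M_{e+1}$}, by induction on $e$ in the style of Lemmas~\ref{lem1} and \ref{th1}. A connected component with $e\ge 2$ edges has a vertex $v$ of degree $\ge 2$; moving two edges through $v$ next to each other and using anticommutativity produces $[x_v,u][x_v,w]$, and the Pigeonhole identity rewrites this as $x_v[x_v,u,w]+[x_v,u,x_v]w+[x_v,x_vw,u]$, whose leading summand contains the genuine length-three commutator $[x_v,u,w]$, still beginning with $x_v$. Iterating (pairing the partially built commutator with the next edge through $v$, and handling the remaining summands via Jacobi) should deposit the whole product in $M_{e+1}$; the fully parallel case $[x_1,x_2]^e\in M_{e+1}$ — which follows from the Pigeonhole identity for $[x_1,x_2]^2\in M_3$ together with iterated use of Theorem~\ref{theorem2} — is a consistent special situation. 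For the gluing part, if $G$ has components of edge-sizes $e_1,\dots,e_s$ with $\sum e_m=k$, the product lies in $M_{e_1+1}\cc M_{e_2+1}\cdots M_{e_s+1}$, and iterating $M_aM_b\sse M_{a+b-2}$ (Theorem~\ref{theorem3}) together with $M_aM_b\sse M_{a+b-1}$ for $a$ or $b$ odd (Theorem~\ref{theorem2}) gives an explicit index $R(e_1,\dots,e_s)$.

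It remains to optimize over $G$. Since each component occupies at least two vertices, $s\le\lfloor n/2\rfloor$. For $s=1$, i.e. $n\in\{2,3\}$, only the single-component case exists, giving $M_{k+1}$ and the ``otherwise'' value. For $s=\lfloor n/2\rfloor\ge 2$ one checks that $R$ is minimized by the configuration with one component of $k-s+1$ parallel edges and $s-1$ further disjoint single edges (so that only $+1$ parity bonuses survive the gluings), with value $\max\{2\lceil(k-s)/2\rceil,0\}+2$; substituting $s=\lfloor n/2\rfloor$ matches the first two lines of the formula. This yields $\II(A_n,j)\ge f$ and pins down the extremal product as $P_0=[x_1,x_2]^{k-s+1}[x_3,x_4]\cdots[x_{2s-1},x_{2s}]$ (or $P_0=[x_1,x_2][x_3,x_4]\cdots[x_{2k-1},x_{2k}]$ when $k\le s$). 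The final task is the non-containment $P_0\notin M_{f+1}$. The PBW argument (Proposition~\ref{proposition1}) only gives $\II\le k+1$, far from sharp, so a finer invariant is needed: I would either compute the image of $P_0$ in $N_f=M_f/M_{f+1}$ using the known low-degree structure of $\mathrm{Gr}_{\mc M}(A_n)$ — the description of $A_n/M_3$ by closed even differential forms and its refinements — where the disjoint commutators $[x_{2i-1},x_{2i}]$ should furnish independent wedge factors that cannot all cancel precisely because $n$ is large enough to carry $s$ of them; or construct a finite-dimensional test algebra, in the spirit of Theorem~\ref{thmzz} and of the Grassmann tensor product $E\otimes E_{i+j-4}$ of Deryabina and Krasilnikov, in which $M_{f+1}$ is identically zero while $P_0$ has nonzero image.

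The main obstacle is this last step: for $n\ge 4$, unlike the $n=2$ case where the Pigeonhole identity alone decides the question, the non-containment demands detecting the survival of $P_0$ deep in the filtration, and neither the PBW grading nor the $\mathfrak{sl}_2$-trace trick of Theorem~\ref{thmzz} is strong enough. Secondary difficulties are the bookkeeping in the merging lemma — the extra generators thrown off by the Pigeonhole identity must be reabsorbed via Leibniz and Jacobi without spoiling the connectivity on which the induction rests — and the need, already in the combinatorial optimization, to track the parity bonuses of Theorem~\ref{theorem2}, since the plain bound $k-s+2$ from Theorem~\ref{theorem3} is off by one whenever $k-s$ is odd.
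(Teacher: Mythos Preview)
This statement appears in the paper as a \emph{conjecture}; the author gives no proof, only the remark that it is believed ``due to combinatorial considerations'' and has been ``checked for small cases.'' There is therefore no argument in the paper to compare your attempt against.

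Your proposal is, as you say, a strategy rather than a proof, and the multigraph framework together with the optimization over component sizes is a very natural reading of the author's ``combinatorial considerations.'' But the two gaps you flag are not residual bookkeeping; together they are essentially the entire content of the conjecture. For the merging lemma in particular, your inductive sketch already runs into trouble at $e=4$. One application of the Pigeonhole identity places a connected two-edge product in $M_3$; pairing with a third edge uses Theorem~\ref{theorem2} (since $3$ is odd) to reach $M_4$; but the fourth edge yields $M_4\cdot M_2$, and now only Theorem~\ref{theorem3} applies, leaving you in $M_4$ rather than $M_5$. The parallel case $[x_1,x_2]^e\in M_{e+1}$ escapes this precisely because it lives in $A_2$, where Theorem~\ref{max2var} supplies the missing parity bonus at every step; for a path or star on four or more distinct generators there is no such shortcut, and it is not even clear that the merging lemma is true in that regime (it is not implied by the conjecture, which concerns only the worst product). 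The non-containment step you rightly call the main obstacle: the PBW bound is off by roughly $\lfloor n/2\rfloor$, the $\mathfrak{sl}_2$ trace sees only $M_2$ versus $M_3$, and the Deryabina--Krasilnikov Grassmann construction is calibrated to a single product $M_iM_j$ rather than an iterated one. So the outline is sensible, but what remains unproved is the conjecture itself.
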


Note that setting $i_1=\cdots = i_k=2$ in Theorem \ref{theorem1} we get that $\II(A_2,(i_1,\dots,i_k))=\II(A_2,(2,\dots,2))=2k-k+1=k+1,$ which is compatible with the conjecture.

\section{General bounds on $\II(A_n,i).$ Summary} \label{bounds}

We try to obtain some easy containments for an arbitrary long product of $M$--ideals, i.e. we consider $\II(A_n,(i_1,\dots,i_k))$ for $k\geq 2.$ Suppose that exactly $p$ of the elements $i_1, \dots, i_k$ are odd ($p\leq k$). Hence, we have $k-p$ even indices. We have Theorem \ref{theorem3} and Theorem \ref{theorem2} in our arsenal. These theorems take two ideals and include them in a bigger one. Therefore, we have $k$ steps until we obtain the complete inclusion. On each step we take two indices and produce a new one. If we take two even indices, then we use Theorem \ref{theorem3} and get a new even index. This way after a step we have one even index less. If we have two odd indices, we use Theorem \ref{theorem2} and obtain an odd index, i.e. after the step we have one odd index less. If we have an odd index and and even index, we get an even index, i.e. after the step we have one odd index less. Therefore, when we use the stronger inclusion we always reduce the number of the odd indices by one. We start with $p$ odd indices. It follows that 
$$
\II(A_n,(i_1,\dots ,i_k)) \geq i_1+\cdots i_k - 2(k-1)+p= \sum_{v=1}^{k} i_v-2k+p+1.
$$
Note that the procedure is invariant of the order of the combinations of indices, because of the above-mentioned combinatorial considerations. This fact also follows algebraically by the Free Permutation Property. Moreover, if $p=k$ then due to the PBW argument we get a maximal inclusion.

In conclusion, we collect the bounds (and equalities) on $\II$ that follow from previous results, combined with this paper: 

\begin{itemize}
\item $\II(A_n,(m,l))\geq m+l-2;$  
\item $\II(A_n,(m,l))\geq m+l-1$ when $m$ or $l$ is odd; 
\item $\II(A_n,(i_1,\dots,i_k))\leq i_1+\cdots+i_k-k+1;$
\item $\II(A_2,(i_1,\dots,i_k))=i_1+\cdots+i_k-k+1;$
\item $\II(A_n,(i_1,\dots,i_k))\geq i_1+\cdots + i_k - 2k + p + 1$ \text{ if } $p$ \text{ indices are odd;}
\item $\II(A_n,(i_1,\dots,i_k))= i_1+\cdots + i_k - k+1$ \text{ if all indices are odd;}
\item $\II(A_3,(2,2))=3.$
\end{itemize}

\section{Applications to $PI$--algebras} \label{applications}

It turns out that results on lower central series containments are very useful in
the study of the following specific quotients
$$
\{B_i(A)=L_i(A)/L_{i+1}(A)\}_{i\geq1} \ \text{and} \
\{N_i(A)=M_i(A)/M_{i+1}(A)\}_{i\geq1}.
$$
Finding the structure of these vector spaces is a hard open problem that
involves the representation theory of $W_n,$ the polynomial vector fields on
$\mathbb{C}^n,$ and, in similar language, the representation theory of the
general linear group on $n$ generators $GL_n.$ There are a few results that shed
light on the structures of the $B$--series and the $N$--series (\cite{FS},
\cite{DKM}, \cite{EKM}, \cite{DE}). 

To apply the results from this paper we replace $A$ with certain PI-algebras
$$
R_{i,j}(A_n):=A_n/(M_i(A_n) \cdot M_j(A_n)) \text{ (we may omit $A_n$ when $n$ is implied). }
$$ 

We similarly define the lower central series for $R_{i,j},$ i.e. $L_1(R_{i,j})=R_{i,j}$ and
$L_i(R_{i,j}):=[R_{i,j},L_{i-1}(R_{i,j})].$ Hence, we obtain the analogous $M,$ $B$ and $N$ ideal series. The PI-Algebras $A/(M_i(A)\cdot
M_j(A))$ are of particular interest in this setting because the structure of the commutator ideals $M_i$ allows the action of $W_n$ to descend to these quotients. In particular, we hope to understand the lower central series quotients in terms of the representation theory of $W_n$.

We consider $R_{i,j}(A_n)$ as $A_n+\mc{I}_n,$ where $\mc{I}_n:=M_i(A_n) \cdot M_j(A_n)$ is the ideal that generates the equivalence classes. Then easily, just by playing with inclusions, one can show that Theorems
\ref{theorem1} and \ref{theorem2} reveal that generally $B_i \cong N_i$ for many
PI-algebras. 

\begin{theorem}[Isomorphism Property]
The following holds
$$
B_i(R_{j,2}) \cong N_i(R_{j,2}) \text{ as graded vector spaces }
$$
for all positive integers $i \geq t$, where $t=2 \left \lceil \frac{j+1}{2} \right \rceil$.
\end{theorem}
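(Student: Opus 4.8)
The plan is to compare the two graded vector spaces through the natural degree-preserving map
$$
\psi_i \colon B_i(R_{j,2}) \longrightarrow N_i(R_{j,2})
$$
induced by $L_i \hookrightarrow M_i$, which is well defined because $L_{i+1} \subseteq M_{i+1}$. Writing $\mathcal I = M_j(A_n)M_2(A_n)$ for the defining ideal of $R_{j,2}$, we have $B_i(R_{j,2}) = L_i(A_n)/(L_{i+1}(A_n) + (L_i(A_n)\cap \mathcal I))$ and $N_i(R_{j,2}) = M_i(A_n)/(M_{i+1}(A_n) + (M_i(A_n)\cap \mathcal I))$, and the basic leverage is that $M_m \cdot M_2 \subseteq \mathcal I$ whenever $m \geq j$, since $M_m \subseteq M_j$. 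In the free algebra $\psi_i$ is very far from onto --- already $N_2(A_n) \cong \bigoplus_{k \geq 1}\Omega^{2k}$ against $B_2(A_n) \cong \Omega^2$ by Feigin and Shoikhet \cite{FS} --- so the content of the theorem is that for $i \geq t$ the entire discrepancy between $N_i$ and $B_i$ is swallowed by $\mathcal I$. Concretely I would prove: (i) $\psi_i$ is injective, and (ii) $\operatorname{coker}\psi_i$ becomes zero in $R_{j,2}$, that is $M_i(A_n) \subseteq L_i(A_n) + M_{i+1}(A_n) + \mathcal I$ for $i \geq t$.

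For (ii) the starting point is the description, via the Pure-commutator Decomposition and the Free Permutation of Section \ref{inclusions} (and, when $n = 2$, the exact values of Theorem \ref{theorem1}), of the spanning set of $M_i(A_n)$ modulo $L_i(A_n) + M_{i+1}(A_n)$: every class is a combination of pure-commutator products $w\,c_1\cdots c_r\,w'$ with $w, w'$ monomials and each $c_s$ a commutator of length $\geq 2$, that have a maximal inclusion in $M_i$ and have at least two constituent commutators (equivalently, PBW degree at least two); a single commutator of length $i$ already lies in $L_i$ and contributes nothing. The claim is that every such product lies in $\mathcal I$ once $i \geq t$: using the Free Permutation one splits the product at a suitable place and applies Theorems \ref{theorem3} and \ref{theorem2} to re-balance the two resulting blocks so that the left block lies in some $M_m$ with $m \geq j$ and the right block in some $M_{m'}$ with $m' \geq 2$, whence the product is in $M_j M_2 \subseteq \mathcal I$. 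The threshold $t = 2\left\lceil\frac{j+1}{2}\right\rceil$ is exactly the least even integer exceeding $j$, and this is precisely the condition that makes such a re-balancing available for every distribution of the total bracket-length $i$ among the $r \geq 2$ blocks --- the requirement that $t$ be even is forced by the odd/even alternative separating Theorem \ref{theorem2} from Theorem \ref{theorem3}. This gives (ii).

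For (i) one must show that an element of $L_i(A_n)$ lying in $M_{i+1}(A_n) + \mathcal I$ in fact lies in $L_{i+1}(A_n) + \mathcal I$. Here the PBW argument is decisive: Lemma \ref{lemma1} and Proposition \ref{proposition1} determine the PBW degree of each homogeneous component, so a component of $L_i$ which survives into $M_{i+1}$ and is not absorbed by $\mathcal I$ is forced to carry the PBW degree characteristic of $L_{i+1}$; and Theorem \ref{theorem6}, which upgrades $[M_l, L_k] \subseteq M_{k+l}$ to $[M_l, L_k] \subseteq L_{k+l}$ for odd $l$, is what allows the conclusion to be placed inside $L_{i+1}$ rather than merely inside $M_{i+1}$. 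Combining (i) and (ii), $\psi_i$ becomes bijective after passing to $R_{j,2}$, and since all constructions respect the standard degree this is an isomorphism of graded vector spaces.

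The step I expect to be the main obstacle is the combinatorial heart of (ii): one must verify, for every pure-commutator product that has a maximal inclusion in $M_i$ with $i \geq t$ and has at least two nontrivial constituents, that after Free Permutation and the re-balancing supplied by Theorems \ref{theorem2} and \ref{theorem3} it can be written with a left block in some $M_m$, $m \geq j$, and a right block in some $M_{m'}$, $m' \geq 2$ --- and that $t$ is sharp for this. A secondary subtlety is the book-keeping around $\mathcal I$: one needs the inclusions $M_m M_2 \subseteq \mathcal I$ for $m \geq j$ and a careful comparison of $L_i \cap \mathcal I$ with $M_i \cap \mathcal I$, degree by degree, so that (i) and (ii) really do assemble into the claimed graded isomorphism.
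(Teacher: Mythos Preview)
The paper does not actually supply a proof of this theorem: the only justification given is the single sentence ``Then easily, just by playing with inclusions, one can show that Theorems~\ref{theorem1} and~\ref{theorem2} reveal that generally $B_i \cong N_i$ for many PI-algebras.'' So there is no detailed argument to compare against; your outline is already far more explicit than what the paper offers, and its overall shape --- establishing the isomorphism by manipulating inclusions among products of $M$-ideals, with Theorems~\ref{theorem2} and~\ref{theorem3} (and, for $n=2$, Theorem~\ref{theorem1}) doing the work --- is exactly the kind of argument the paper's one-line hint points to.

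That said, two places in your outline are genuinely incomplete rather than merely sketched. First, in part~(ii) you assert that $M_i(A_n)$ modulo $L_i(A_n)+M_{i+1}(A_n)$ is spanned by pure-commutator products with at least two nontrivial constituents and maximal inclusion in $M_i$. The Pure-commutator Decomposition yields a linear combination of such products, but there is no a~priori reason each individual summand lies in $M_i$; for general $n$ one does not know the maximal containments, so your proposed splitting-and-rebalancing step cannot be carried out termwise without further argument. A cleaner route to surjectivity is to start from $M_i = A L_i$, commute the $A$-factor to the right modulo $L_{i+1}\subseteq M_{i+1}$, and then absorb $L_i\cdot M_2$ directly into $M_jM_2=\mathcal I$ once $i\geq t>j$; this is closer to the bare ``playing with inclusions'' the paper alludes to and avoids the spanning-set issue. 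Second, your injectivity argument~(i) is hand-wavy: invoking Lemma~\ref{lemma1} constrains PBW degree, but it does not by itself force an element of $L_i\cap(M_{i+1}+\mathcal I)$ into $L_{i+1}+\mathcal I$, and your appeal to Theorem~\ref{theorem6} (which requires $l$ odd) is not connected to any specific step. You would need an explicit comparison of $L_i\cap(M_{i+1}+\mathcal I)$ with $L_{i+1}+(L_i\cap\mathcal I)$, and this is where the parity-driven threshold $t$ should reappear.
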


Moreover, $PI$-algebras allow us to obtain explicit description of the vector spaces $N,$ which give us an explicit approximation of the corresponding $PI$-algebra. We use the identity
$$
[a,b,\ell]=[b,a,\ell]+[[a,b],\ell],
$$
which combined with the Jacobi identity and the anti-commutative law allows us to permute slots of the commutator with some error terms ($[[a,b],\ell]$) as a trade-off. 

For example, we have such a description of the free metabelian associative algebra $R_{2,2}(A_n)$ (comprehensively studied by Drensky in \cite{D}). We use the metabelian identity $[a,b][c,d]=0$ to get that $[a,b,\ell]=[b,a,\ell] \mod M_2 \cdot M_2.$ This allows us to permute freely the elements in the commutator, and thus find a span for $M_i(R_{2,2}).$ Induction leads us to the following result.

\begin{theorem}[The Structure Theorem]
        For a fixed $r\geq 1$ a basis for the elements $N_r(R_{2,2})$ is
            $$
                \{x_1^{a_1}\cdots x_n^{a_n}[x_{i_1},\dots, x_{i_r}]\},
            $$
        where $a_j\geq 0$ for all $j$, $i_1>i_2$ and $i_2 \leq \cdots \leq i_r$ in the
        commutators of length $r$.
    \end{theorem}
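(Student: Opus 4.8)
The plan is to prove ``spanning'' and ``linear independence'' separately: the first is a direct computation on top of the three identities of this section, the second is the real obstacle. We may assume $r\ge 2$, since $N_1(R_{2,2})=R_{2,2}/M_2(R_{2,2})=A_n/M_2(A_n)\cong\mathcal K[x_1,\dots,x_n]$, where the displayed elements are exactly the monomials. The first step is to record what collapses inside $R_{2,2}$: because $M_s(A_n)\subset M_2(A_n)$ for $s\ge 2$ and $M_2(R_{2,2})\cdot M_2(R_{2,2})=0$ by construction, in $R_{2,2}$ we have $M_2\cdot M_s=0$ for all $s\ge 2$. I would use three consequences repeatedly: (i) a product of two pure commutators each of length at least two vanishes; (ii) expanding $[x_i,a\ell b]$ with $\ell\in L_s$, $s\ge 2$, by the Leibniz law gives $[x_i,M_s]\subset M_{s+1}$, so a generator may be pushed past a length-$s$ commutator at the cost of an error in $M_{s+1}$; (iii) two adjacent generators inside a word that already contains a commutator of length at least two may be transposed for free, the error lying in $M_2\cdot M_2=0$.

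\emph{Spanning.} I would take $m\in M_r(R_{2,2})$, write it as a sum of terms $u\ell v$ with $u,v$ monomials and $\ell$ a length-$r$ commutator in monomial slots, and apply the Pure-commutator Decomposition to each $\ell$. By (i) every summand with two or more factors of length at least two dies, and since $r\ge 2$ the all-length-one summands are ruled out as well, so what survives is a word in the generators with a single pure commutator $c$ of length $r$ inserted somewhere. Sorting the generators with (iii) and pushing them to the left of $c$ with (ii) gives $m\equiv\sum x_1^{a_1}\cdots x_n^{a_n}\,c\pmod{M_{r+1}}$. It then remains to straighten each $c$: the identity $[a,b,\ell]=[b,a,\ell]+[[a,b],\ell]$, with $[[a,b],\ell]$ vanishing in $R_{2,2}$ whenever $\ell$ has length at least two (by (i)) and the Jacobi identity handling the short tails, is exactly the straightening for the free metabelian Lie algebra; iterating it, with induction on the degree and a secondary count of inversions, writes $c$ as a combination of the claimed commutators $[x_{i_1},\dots,x_{i_r}]$ with $i_1>i_2\le i_3\le\cdots\le i_r$. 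This proves the displayed set spans $N_r(R_{2,2})$.

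\emph{Linear independence --- the hard part.} The plan is a Magnus-type embedding: extend
\[
x_i\longmapsto\begin{pmatrix} t_i & v_i\\ 0 & s_i\end{pmatrix}\in\mathrm{Mat}_2\bigl(\mathcal K[t_1,\dots,t_n,s_1,\dots,s_n,v_1,\dots,v_n]\bigr)
\]
to an algebra homomorphism $\varphi$ out of $A_n$. Every commutator lands in the strictly upper triangular matrices, so products of two commutators map to zero and $\varphi$ descends to $R_{2,2}$. Writing $u_k:=t_k-s_k$, an easy induction shows that the upper-right entry of $\varphi([x_{i_1},\dots,x_{i_r}])$ equals, up to sign, $u_{i_3}\cdots u_{i_r}\,(u_{i_1}v_{i_2}-u_{i_2}v_{i_1})$ --- a polynomial in the $u$'s and $v$'s alone --- and that left multiplication by $x_1^{a_1}\cdots x_n^{a_n}$ multiplies this entry by $t_1^{a_1}\cdots t_n^{a_n}$. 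Now fix the total degree $d$ and suppose $\sum c\,x_1^{a_1}\cdots x_n^{a_n}[x_{i_1},\dots,x_{i_r}]\in M_{r+1}(R_{2,2})$ with $a_1+\cdots+a_n=d-r$ throughout. Applying $\varphi$ and comparing upper-right entries: the left side is $t$-homogeneous of degree $d-r$ in the coordinates $t_k,u_k,v_k$, whereas $\varphi$ of an element of $M_{r+1}(A_n)$ of total degree $d$ produces (after the same pure-commutator decomposition: one surviving pure commutator of length $r+1$, whose $\varphi$-image is $t$-free, flanked by at most $d-r-1$ generators) only monomials of $t$-degree at most $d-r-1$. Hence $\sum c\,t_1^{a_1}\cdots t_n^{a_n}\,u_{i_3}\cdots u_{i_r}(u_{i_1}v_{i_2}-u_{i_2}v_{i_1})=0$ in the polynomial ring. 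Reading off the coefficient of each $v_m$ and splitting it by whether the accompanying $u$-monomial involves some $u_j$ with $j<m$, the ``$j<m$'' part collects exactly the tuples with $i_1=m$ (for which $i_2<m$), each contributing a distinct monomial $u_{i_2}u_{i_3}\cdots u_{i_r}$; so those coefficients vanish, and letting $m$ run over all values kills every coefficient. This gives independence in $N_r(R_{2,2})$. Alternatively one may simply cite Drensky's description of the relatively free algebra of the identity $[x_1,x_2][x_3,x_4]=0$ in \cite{D}, or match Hilbert series in each multidegree against the count $(r-1)\binom{n+r-2}{r}$ of standard tuples.

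I expect the main obstacle to be exactly this last part. The spanning half is bookkeeping with the three identities, but independence needs genuine input: either the external structure theory, or the careful analysis above, whose two delicate points are separating the contribution of commutator-length precisely $r$ from that of $M_{r+1}$ by the $t$-degree count, and ruling out accidental cancellations among the standard tuples via the $v_m$-coefficient splitting.
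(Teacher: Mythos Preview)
Your spanning argument is exactly the paper's: the paper records that $[a,b][c,d]=0$ in $R_{2,2}$ forces $[a,b,\ell]=[b,a,\ell]$ there, says this lets one permute the slots of a commutator freely and hence obtain a span for $M_r(R_{2,2})$, and leaves the rest to ``induction''. Your items (i)--(iii) together with the straightening are precisely this, made explicit.

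For linear independence the paper gives no argument beyond the citation of Drensky; your Magnus-type representation into upper-triangular $2\times 2$ matrices over a polynomial ring is one of the standard routes to exactly that result and is essentially what underlies Drensky's description. So here your proof is strictly more detailed than the paper's, and the approach is the classical one. Your separation of $M_{r+1}$ from the candidate basis by $t$-degree is correct once one notes, by iterating the spanning step, that every degree-$d$ element of $M_{r+1}(R_{2,2})$ is a combination of $x^{a}c$ with $c$ a pure commutator of length at least $r+1$ and hence $|a|\le d-r-1$.

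One caveat on bookkeeping. The paper norms commutators to the right, $[a_1,\dots,a_m]=[a_1,[a_2,\dots,a_m]]$, whereas your formula $u_{i_3}\cdots u_{i_r}(u_{i_1}v_{i_2}-u_{i_2}v_{i_1})$ and the ``coefficient of $v_m$, split by the presence of some $u_j$ with $j<m$'' argument are written for the left-normed convention. In fact the basis condition $i_1>i_2\le\cdots\le i_r$ stated in the theorem is the classical Hall normalization for \emph{left}-normed metabelian commutators; under the paper's right-normed convention the innermost bracket would be $[x_{i_{r-1}},x_{i_r}]$, and tuples with $i_{r-1}=i_r$ would already be zero. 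So the theorem as stated implicitly switches convention, and your computation is consistent with that reading. It would be cleanest to say explicitly at the start of your proof which bracketing you use.
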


Note that this is an explicit description of the structure for the $N$--series of the free metabelian associative algebra, and it is for any number of generators $n$ and any $r.$ We extend this idea using maximal inclusions for $n=2$ to prove other results, based on the language of $GL_2$-representations.

\begin{theorem}[The Structure Theorem for $R_{2,3}$]
For $r$ greater than four we have that for the algebra $R_{2,3}(A_2)$ the following holds:
$$N_r(R_{2,3}(A_2)) \cong \left(\sum_{i=0}^{\infty} Y_2(i)\right) \otimes \left( Y_2(r-1,1) \oplus Y_2(r-3,1) \otimes Y_2(1,1) \right).$$
\end{theorem}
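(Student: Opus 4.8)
The plan is to imitate the proof of the Structure Theorem for $R_{2,2}(A_2)$, keeping careful track of the elements that now survive because the defining relation $M_2M_3=0$ is weaker than the metabelian relation $M_2M_2=0$ (recall $Y_2(r-3,1)\otimes Y_2(1,1)=Y_2(r-2,2)$, since tensoring with the determinant $Y_2(1,1)$ shifts a highest weight by $(1,1)$, so the target is $\left(\sum_i Y_2(i)\right)\otimes\bigl(Y_2(r-1,1)\oplus Y_2(r-2,2)\bigr)$). First I would reduce to a single graded piece. Using the Pure-Commutator Decomposition and pushing all monomials to the far left modulo $M_{r+1}$ (legitimate because $\ell w\equiv w\ell \bmod M_{r+1}$ for a length-$r$ commutator $\ell$), one shows that $N_r(R_{2,3}(A_2))$ is a free module over $N_1(R_{2,3}(A_2))=A_2/M_2(A_2)=\mathcal{K}[x_1,x_2]=\sum_i Y_2(i)$. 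Hence it suffices to compute the degree-$r$ part $V_r:=N_r(R_{2,3}(A_2))^{(r)}$, which equals $\mathrm{Lie}_2(r)=L_r(A_2)^{(r)}$ modulo $W_r:=\mathrm{Lie}_2(r)\cap\bigl(M_2(A_2)M_3(A_2)\bigr)$, the pure commutators of length $r$ lying in $M_2M_3$. The goal becomes $V_r\cong Y_2(r-1,1)\oplus Y_2(r-2,2)$.

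Second, I would use the inclusion $M_2M_3\subseteq M_2M_2$. The (already proven) Structure Theorem for $R_{2,2}(A_2)$ gives $\mathrm{Lie}_2(r)/(\mathrm{Lie}_2(r)\cap M_2M_2)\cong B_r(R_{2,2}(A_2))\cong Y_2(r-1,1)$ (the $r-1$ canonical commutators of length $r$). So $V_r$ surjects $GL_2$-equivariantly onto $Y_2(r-1,1)$, and the whole problem reduces to identifying the kernel $(\mathrm{Lie}_2(r)\cap M_2M_2)/W_r$. To get a handle on it, view $R_{2,3}(A_2)$ as a square-zero extension of $R_{2,2}(A_2)$ by the ideal $J=M_2M_2/M_2M_3$ (note $(M_2M_2)^2\subseteq M_2M_3M_2\subseteq M_2M_3$, so $J^2=0$); since modulo $M_3$ every element of $M_2$ is a monomial multiple of $[x_1,x_2]$, inside $R_{2,3}(A_2)$ one has $m_2m_2'\equiv m_2[x_1,x_2]w\pmod{M_2M_3=0}$, and for $m_2\in M_k(A_2)$ the Leibniz rule together with $[M_k,L_2]\subseteq M_{k+2}$ (Theorem \ref{th1}) shows $m_2[x_1,x_2]\equiv[m_2,[x_1,x_2]]$ lands in $M_{k+2}$. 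Specializing to the lowest degree, the kernel $(\mathrm{Lie}_2(r)\cap M_2M_2)/W_r$ is spanned by the classes of the single length-$r$ commutators $[[x_1,x_2],\ell]$ for $\ell$ ranging over pure commutators of length $r-2$; since there is exactly one copy of $Y_2(r-3,1)$ in $\mathrm{Lie}_2(r-2)$, this gives a surjection $Y_2(1,1)\otimes\mathrm{Lie}_2(r-2)\twoheadrightarrow(\mathrm{Lie}_2(r)\cap M_2M_2)/W_r$ whose source contains $Y_2(r-2,2)=Y_2(1,1)\otimes Y_2(r-3,1)$ with multiplicity one.

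Third, I would pin $W_r$ down on each isotypic component. The ``$\supseteq$'' direction: every component of $Y_2(1,1)\otimes\mathrm{Lie}_2(r-2)$ except $Y_2(r-2,2)$ maps to zero, i.e.\ the corresponding $[[x_1,x_2],\ell]$ lies in $M_2M_3$. Here one uses that a pure commutator $\ell$ of length $r-2$ whose $GL_2$-type has second row $\ge 2$ can (Jacobi/Lazard elimination) be written with a bracket $[x_1,x_2]$ nested inside; then a Leibniz expansion of $\ell[x_1,x_2]=[[x_1,x_2],\ell]$ produces only terms of the shape $[x_1,x_2]\cdot(\text{commutator of length}\ge 3)\cdot(\cdots)$ (using $r>4$), each of which is manifestly in $M_2M_3$. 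Combined with the multiplicity-one observation, $V_r$ has at most one copy of $Y_2(r-2,2)$ over $Y_2(r-1,1)$. The ``$\subseteq$'' direction: the remaining copy of $Y_2(r-2,2)$ really survives. A highest-weight representative is $[[x_1,x_2],\ell_0]$ with $\ell_0=[\underbrace{x_1,\dots,x_1}_{r-3},x_2]$ the highest-weight vector of $Y_2(r-3,1)\subseteq\mathrm{Lie}_2(r-2)$; since $[x_1,x_2]\ell_0\in M_2M_3$, in $R_{2,3}(A_2)$ we have $[[x_1,x_2],\ell_0]=-\ell_0[x_1,x_2]\in M_3M_2$, and the content of the claim is exactly that $M_3M_2\not\subseteq M_2M_3$ on this element. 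Non-membership is certified by the PBW grading of Lemma \ref{lemma1} refined through the $\mathfrak{sl}(2)$-evaluation homomorphism of Theorem \ref{thmzz}: a suitable weight-graded trace functional vanishes on $M_2M_3$ but is nonzero on $\ell_0[x_1,x_2]$.

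The main obstacle is the third step --- matching $W_r$ with the precise $GL_2$-submodule of $\mathrm{Lie}_2(r)$ uniformly in $r$. The ``$\supseteq$'' part is a bookkeeping exercise with the Leibniz, Jacobi and Pigeonhole identities, reducing every ``deep'' commutator to a product with a length-$\ge 3$ factor sitting to the right of a $[x_1,x_2]$; the delicate point is bounding the surviving multiplicity to exactly one, for which multiplicity-one of $Y_2(r-3,1)$ in $\mathrm{Lie}_2(r-2)$ suffices once one knows the kernel is generated by the $[[x_1,x_2],\ell]$'s. The ``$\subseteq$'' part needs the non-vanishing certificate, where the $\mathfrak{sl}(2)$-trace argument of Theorem \ref{thmzz} must be sharpened to detect the $\det^2$-twisted hook $Y_2(r-2,2)$ specifically rather than merely an element of the top parity class. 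Once $V_r$ is computed, the statement in all degrees $\ge r$ follows formally from the free $\mathcal{K}[x_1,x_2]$-module structure of the first step, producing the tensor factor $\sum_i Y_2(i)$.
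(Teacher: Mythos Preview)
The paper does not give a detailed proof of this theorem; it only indicates that the result follows from the maximal containment $M_iM_j\subset M_{i+j-1}$ together with the bijection between ``sorted'' commutators $[x_{i_1},\dots,x_{i_r}]$ (with $i_1>i_2\le\cdots\le i_r$) and semi-standard Young tableaux. That is a basis-level, combinatorial argument: one produces an explicit spanning set for $N_r$, verifies independence, and reads off the $GL_2$-decomposition from the tableau shapes. Your route through the square-zero extension $R_{2,3}\twoheadrightarrow R_{2,2}$ is genuinely different and conceptually attractive, but it has two real gaps that the paper's combinatorial approach sidesteps.

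First, your step~2 does not establish that the kernel $(\mathrm{Lie}_2(r)\cap M_2M_2)/W_r$ is spanned by the classes $[[x_1,x_2],\ell]$. The manipulation $m_2m_2'\equiv \tilde m_2[x_1,x_2]\pmod{M_2M_3}$ is about arbitrary elements of $M_2M_2$, not elements that happen to lie in $\mathrm{Lie}_2(r)$; and when you then write $\tilde m_2[x_1,x_2]=[\tilde m_2,[x_1,x_2]]+[x_1,x_2]\tilde m_2$, the second term leaves a residue of the form $[x_1,x_2]^2v$, which is \emph{not} in $M_2M_3$ (it has degree $\ge 4$ but $M_2M_3$ starts in degree $5$). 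So you have not bounded the kernel from above. For $r\ge 7$ this matters: $\mathrm{Lie}_2(r)\cap M_2M_2$ contains several isotypic components with multiplicity $\ge 2$, and you need an independent reason why exactly one copy of $Y_2(r-2,2)$ survives. Second, your non-vanishing certificate in step~3 is not actually built. The $\mathfrak{sl}(2)$-trace of Theorem~\ref{thmzz} detects non-membership in $M_k$ (because commutators have trace zero), but it says nothing about non-membership in the ideal $M_2M_3$: that ideal already sits inside $M_4$, so a trace-type invariant that vanishes on all of $M_4$ gives no information. You would need a functional on $A_2$ that vanishes on $M_2M_3$ but not on $\ell_0[x_1,x_2]$, and nothing in the PBW or $\mathfrak{sl}(2)$ toolkit of the paper supplies one. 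The paper's approach avoids both issues by exhibiting an explicit basis directly.
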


In this theorem $Y_2(r-1,1)$ is the $GL_2$ module associated with the partition $(r-1,1).$ We achieve this result through the maximal inclusion of the $M$--ideals and the following bijection map 
%$$
%[x_{i_1},\dots,x_{i_r}] \mapsto \young(i_2 i_3 i_4 \cdots i_{r}, i_1 ),
%$$
between ``sorted'' commutators, i.e. $i_1> i_2 \leq i_3 \leq \cdots \leq i_r,$ and semi-standard Young tableaux: 

$$
[x_{i_1},\dots,x_{i_r}] \mapsto \young(\I \blah \III \cdots \IIIIII,\IIIIIII ).
$$

\begin{remark}
Loosely speaking, in the above-mentioned problems we have two \emph{parameters}: the number of generators, and the length of the commutators. It seems that if we fix one of the parameters to be sufficiently small we obtain results that provide us with a good classification of structure and behavior. We do not know how to think about explicit description when we set the two parameters free.
 
Finally, just for fun, we leave the discussion open with these two interesting
and potentially strong identities.
$$
[x,[y,[y,x]]]=[y,[x,[y,x]]]
$$
$$
[xy,[y,x]]=[yx,[y,x]]
$$
\end{remark}

\section*{Acknowledgements}
My deepest gratitude goes to Nathan Harman, graduate student at the Massachusetts
Institute of Technology, for the fruitful discussions (especially about the PBW
theorem) and his comments on this paper. Furthermore, I sincerely acknowledge
Professor Pavel Etingof for suggesting the consideration of the PBW theorem and
providing constant encouragement.

\bibliographystyle{plain}
\bibliography{biblio}

\end{document}